\documentclass[12pt]{article}

% include this package to customize page layout of document (e.g. to adjust margins)
\usepackage[margin=0.9in]{geometry}
\usepackage{pgf,tikz}
\usepackage{mathrsfs}
\usepackage{amsmath}
\usepackage{amssymb}
\usepackage{hyperref}
\usepackage[utf8]{inputenc}
\usepackage[english]{babel}
\usepackage{float}
\usepackage{epstopdf}
\usetikzlibrary{arrows}

\usepackage{amsthm}
\newtheorem{definition}{Definition}[section]
\newtheorem{proposition}{Proposition}[section]
\newtheorem{theorem}{Theorem}[section]
\newtheorem{corollary}{Corollary}[theorem]
\newtheorem{lemma}[theorem]{Lemma}

\newenvironment{remark}[1][Remark]{\begin{trivlist}
\item[\hskip \labelsep {\bfseries #1}]}{\end{trivlist}}
% include this to import graphics files with command \includegraphics
\usepackage{graphicx, url}

% set numbering style for enumerated lists to be of form (a), (b), (c), etc.

\usepackage{graphicx}

\addtolength{\fboxsep}{3px}

\usepackage{enumitem}

\begin{document}
\title{Triangles with Vertices Equidistant to a Pedal Triangle}
\author{Xuming Liang, Ivan Zelich}
\date{}
\maketitle

\begin{abstract}
	In this paper, we present a synthetic solution to a geometric open problem involving the radical axis of two strangely defined circumcircles. The solution encapsulates two generalizations, one of which uses a powerful projective result relating isogonal conjugation and polarity with respect to circumconics.
\end{abstract}

\section{Introduction}

The 2012 volume of the Journal of Classical Geometry featured an open problem posed by Lev Emelyanov \cite{openproblem}, which with some change in notation can be stated as follows

\begin{theorem}
	Let $N$ be the Nagel point of a triangle ABC. Let $A_0, B_0$ and $C_0$ be the touch points of excircles with the sides $BC, CA, AB$ respectively. Let us consider six points: $A_1$ and $A_2$ on the $BC$, $B_1$ and $B_2$ on the $CA$, $C_1$ and $C_2$ on the $AB$, such that 
	\[
	A_1A_0 = A_2A_0 = B_1B_0 = B_2B_0 = C_1C_0 = C_2C_0.
	\]
	Points $A_1, B_1, C_1$ lie on the rays $A_0C, B_0A, C_0B$ and $A_2, C_2, B_2$ lie on the rays $A_0B, C_0A, B_0C$ respectively.
	Then the radical axis of circumcircles $\odot (A_1B_1C_1) and \odot(A_2B_2C_2)$ passes through the Nagel point $N$, centroid and incenter of $ABC$.
\end{theorem}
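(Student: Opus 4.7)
The three distinguished points $N$, $G$, $I$ are collinear on the Nagel line of $\triangle ABC$, so to show that the radical axis of $\omega_1 := \odot(A_1B_1C_1)$ and $\omega_2 := \odot(A_2B_2C_2)$ passes through all three it suffices to verify that two of them, say $G$ and $I$, have equal power with respect to $\omega_1$ and $\omega_2$. Denoting the common distance by $t$, a useful reformulation is that the substitution $t \mapsto -t$ interchanges the two labeled triples and therefore swaps $\omega_1$ and $\omega_2$; hence the statement ``$P$ lies on the radical axis'' is equivalent to saying that the power of $P$ with respect to $\omega_1(t)$, regarded as a function of $t$, is even.

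Guided by the abstract, the synthetic strategy is to generalize first and specialize afterward. I would replace the extouch triangle by the pedal (or cevian) triangle of an arbitrary point $P$, perform the same equidistance construction, and identify the resulting radical axis projectively. I expect the advertised projective result to characterize this radical axis as the polar of a distinguished pole with respect to a certain circumconic canonically attached to the configuration, with isogonal conjugation mediating between lines and circumconics. In the extouch specialization ($P = N$), the polar should turn out to be the Nagel line, and the three named points appear as its natural projective landmarks: $G$ is the affine centroid, $N$ is the cevian generator of the extouch triangle, and $I$ is the image of $N$ under the homothety centered at $G$ of ratio $-\tfrac{1}{2}$ (the relation $2\overrightarrow{GI} + \overrightarrow{GN} = 0$ along the Nagel line).

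The main obstacle is the projective step — identifying the correct circumconic and the correct pole — which is precisely the novel content the abstract flags. A computational fallback is available: the six vertices have the explicit barycentric coordinates
\[
A_1 = (0 : s-b-t : s-c+t), \qquad A_2 = (0 : s-b+t : s-c-t),
\]
and cyclically for $B_i, C_i$. Imposing the generic circle equation $-a^2yz - b^2zx - c^2xy + (x+y+z)(u_i x + v_i y + w_i z) = 0$ at the three vertices of each $\omega_i$ gives linear systems determining $(u_i, v_i, w_i)$; the radical axis is then $(u_1 - u_2)x + (v_1 - v_2)y + (w_1 - w_2)z = 0$, and one checks by direct substitution that both $G = (1:1:1)$ and $I = (a:b:c)$ satisfy this equation. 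This route finishes the proof but conceals the geometric content, which is why the synthetic approach is preferable.
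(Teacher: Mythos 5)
Your proposal correctly reduces the problem (it suffices to check equal powers for $G$ and $I$, since the radical axis through both is then the Nagel line and contains $N$), your barycentric coordinates for $A_1, A_2$ are right, and the observation that $t \mapsto -t$ interchanges $\odot(A_1B_1C_1)$ and $\odot(A_2B_2C_2)$ --- so that membership in the radical axis is equivalent to evenness of the power function --- is a genuinely nice reformulation (it is a coordinate shadow of the paper's Lemma 2.3, where a single reflection over $B'C'$ carries $H_A'A_1A_2A_P$ to $H'Q_1Q_2Q$). But as it stands the proposal contains no proof: both of your branches terminate exactly at the hard step. The synthetic branch is an educated guess at the paper's architecture (``I expect the advertised projective result to characterize this radical axis as the polar of a distinguished pole\dots''), and you concede that identifying the conic and the pole is the open obstacle; nothing is established. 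The computational branch is sound in outline, but the sentence ``one checks by direct substitution'' is where the entire content lives: one must solve two $3\times 3$ linear systems whose entries are quadratic in $t$ (Cramer's rule with the messy vertices $B_1 = (s-a+t : 0 : s-c-t)$ etc.), form $u_1-u_2$, $v_1-v_2$, $w_1-w_2$, and verify identically in $t$ that the resulting line is $(b-c)x + (c-a)y + (a-b)z = 0$. That elimination is several pages of non-automatic algebra, and the proposal neither performs it nor structures it (for instance, via your parity trick, by isolating the $t$-odd part of each coefficient). A claim that a feasible computation would succeed is not the same as the computation.

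For comparison, the paper's actual route is quite different from your conjectured synthetic one. It does not describe the radical axis as a polar; instead it proves (Theorem 2.1) that for the pedal triangle of \emph{any} point $P$ the radical axis passes through a concretely identified fixed point --- the orthocenter $H'$ of the triangle $A'B'C'$ of pedals of $P$ on $AI, BI, CI$ --- using orthologic triangles, the fact that $A_1B_1C_1$ is a reflection triangle of $Q_1 = O_{A_1B_1C_1}(A'B'C')$, and a metric lemma ($R_{PQ}^2 = HQ^2 + HP^2 + |R^2 - OH^2|$) for isogonal conjugates. The projective result (Proposition 3.3) enters only to fix the \emph{direction} of the axis when $P \in IO$: isogonal conjugates of points symmetric about the foot of the perpendicular from the circumcenter travel on lines of fixed direction. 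Finally, the Nagel line is identified via Proposition 4.1 ($H'$ \emph{is} the Nagel point when $P$ is the Bevan point), Sondat's theorem, and a closing cross-ratio chain. So the missing idea in your synthetic sketch is precisely the paper's fixed point $H'$ and the orthocenter metric lemma; without it (or without executing the barycentric elimination), the proof has a genuine gap.
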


\begin{figure}[H]
	\centering
	\includegraphics[scale = 0.6]{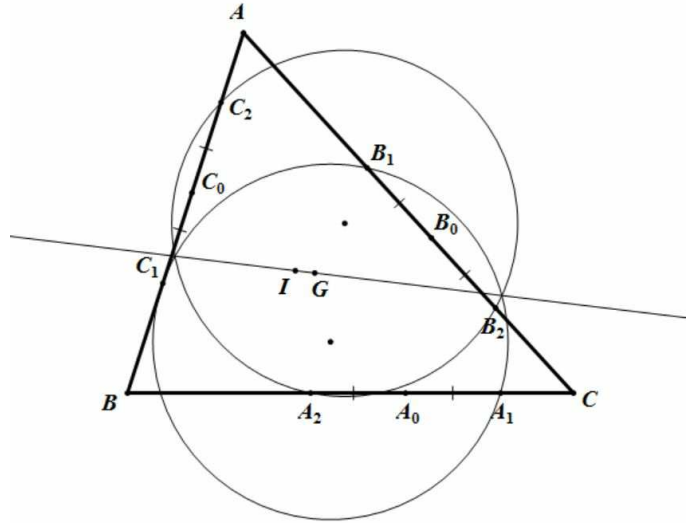}
	\caption{Theorem 1.1}
	\label{T11}
\end{figure}

In this paper, we will present two generalizations to this proposition. It turns out if $A_0B_0C_0$ is the pedal triangle of an arbitrary point $P$ on the plane of $ABC$, then the radical axis in question would always pass through a fixed point (Theorem 2.1). Furthermore, if $P$ lies on the line connecting the incenter and circumcenter of $ABC$, then the radical axis becomes fixed(Theorem 3.1). Note that $A_0B_0C_0$ in the problem statement above is actually the pedal triangle of the Bevan point of $ABC$, which lies on the incenter-circumcenter line, so the generalization partially explains why the radical axis does not change. It will take additional insights from a powerful projective result (Proposition 3.3) to reveal that the radical axis coincides with the Nagel line\textemdash the line that contains the Nagel point, centroid, and incenter.

\section{General Case: $P$ is arbitrary}

The purpose of this section is to prove one the main results captured by the paper, stated below.

\begin{theorem}
	Let $P$ be an arbitrary point on the plane of triangle $ABC$. Let $A_PB_PC_P$ be the pedal triangle of $P$ on $ABC$. Define $A_1,A_2,B_1,B_2,C_1,C_2$ such that for some $x > 0$ 
	\[ 
	x = A_PA_1 = A_PA_2 = B_PB_1 = B_PB_2 = C_PC_1 = C_PC_2,
	\]
	with $A_1,B_1,C_1$ lying on rays $A_PC, B_PA, C_PB$, and $A_2,B_2,C_2$ lying on rays $A_PB, B_PC, C_PA$. As $x$ varies, the radical axis of $\odot A_1B_1C_1$ and $\odot A_2B_2C_2$ passes through a fixed point.
\end{theorem}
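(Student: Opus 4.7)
I plan to work in Cartesian coordinates and exploit the symmetry $A_2(x) = A_1(-x)$ exchanging the two configurations. Write each circle through three points as $\omega_P + h = 0$, where $\omega_P$ is the pedal circle and $h(X,Y) = \lambda X + \mu Y + \nu$ is affine; its coefficient vector $\vec w_1 = (\lambda_1,\mu_1,\nu_1)^T$ for $c_1$ satisfies the linear system $(M_0 + xU)\vec w_1 = x\vec d - x^2 \vec 1$, where $M_0$ has rows $(A_P,1), (B_P,1), (C_P,1)$, $U$ has rows $(\hat u_a,0),(\hat u_b,0),(\hat u_c,0)$ with $\hat u_a, \hat u_b, \hat u_c$ the unit vectors along $BC, CA, AB$ pointing toward $C, A, B$ respectively, and $\vec d$ collects the signed chord lengths cut by each side on $\omega_P$ (equivalently $d_i = (P^*-P)\cdot \hat u_i$). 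For $c_2$ the matrix is $M_0 - xU$ and the right-hand side is $-x\vec d - x^2\vec 1$.

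The radical axis of $c_1, c_2$ is the locus $h_1 - h_2 = 0$, so I want to find $F$ with $v := (F_x, F_y, 1)^T$ perpendicular to $\vec w_1 - \vec w_2$ for every $x$. Two simplifications eliminate the bulk of the formula. First, $(M_0 \pm xU)^{-1} \vec 1 = e_3$, because the last column of the matrix on the left is $\vec 1$; this kills the $\vec 1$ terms in the difference. Second, splitting the remaining expression into even and odd parts in $x$ collapses it to
\[
\vec w_1 - \vec w_2 \;=\; 2x\,(I - x^2 L)^{-1} M_0^{-1} \vec d, \qquad L := (M_0^{-1}U)^2.
\]
The condition on $F$ becomes $v^T L^j M_0^{-1}\vec d = 0$ for all $j \ge 0$.

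The main obstacle is to show that this infinite list of conditions is compatible. The key structural input is that $\vec d$ lies in the column range of $U$, and the left null space of $U$ is spanned by the side-length vector $(a,b,c)^T$ because
\[
a\hat u_a + b\hat u_b + c\hat u_c \;=\; (C-B)+(A-C)+(B-A) \;=\; 0.
\]
Thus $M_0^{-1}\vec d$ lies in the two-dimensional $L$-invariant subspace $M_0^{-1}\cdot \operatorname{range}(U)$, so the entire Krylov orbit $\{L^j M_0^{-1}\vec d\}_{j\ge 0}$ stays there and its orthogonal complement in $\mathbb{R}^3$ is one-dimensional. Normalizing the last coordinate to $1$ produces the explicit fixed point
\[
F \;=\; \frac{a\cdot A_P + b\cdot B_P + c\cdot C_P}{a+b+c}.
\]
As a sanity check, when $P$ is the Bevan point the pedal feet are the excircle touch points, and a short computation gives $aA_P + bB_P + cC_P = 2\bigl((s-a)A + (s-b)B + (s-c)C\bigr)$, so $F$ reduces to the Nagel point $(s-a:s-b:s-c)$, consistent with Theorem~1.1.
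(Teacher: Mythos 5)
Your proposal is correct, and it takes a genuinely different route from the paper. The paper argues synthetically: it projects $P$ onto the internal bisectors to get $A'B'C'$, shows via Proposition 2.1 and orthology (Lemma 2.2) that $A_1B_1C_1$ is the reflection triangle of a point $Q_1$ whose isogonal conjugate w.r.t.\ $A'B'C'$ is the circumcenter $O_1$, proves $H'Q_1 = H'Q_2$ for the orthocenter $H'$ of $A'B'C'$ by reflecting over $B'C'$ (Lemma 2.3), and then invokes the cited metric identity of Proposition 2.2 (whose proof the paper omits) to conclude that $H'$ has equal power with respect to the two circles. You instead linearize: subtracting the pedal-circle polynomial makes the unknown circle coefficients solve $(M_0 \pm xU)\vec w = \pm x\vec d - x^2\vec 1$, and your three structural observations all check out --- the last column of $M_0 \pm xU$ being $\vec 1$ kills the $x^2$ terms in the difference; $\vec d \in \operatorname{range}(U)$ because each sideline meets the pedal circle again at the pedal foot of the isogonal conjugate $P^*$, giving $d_i = (P^*-P)\cdot \hat u_i$; and $a\hat u_a + b\hat u_b + c\hat u_c = 0$ makes $M_0^T(a,b,c)^T$ annihilate the $L$-invariant plane $M_0^{-1}\operatorname{range}(U)$ that contains the entire Krylov orbit. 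I verified the algebra, including $\vec w_1 - \vec w_2 = 2x(I-x^2L)^{-1}M_0^{-1}\vec d$, and your Bevan-point computation agrees with the paper's Proposition 4.1 (fixed point $=$ Nagel point there); in general your $F = (aA_P + bB_P + cC_P)/(a+b+c)$ must coincide with the paper's $H'$, since for $P$ off the line $IO$ the radical axes form a nondegenerate pencil with a unique common point, and continuity handles the rest. The trade-offs: your proof is self-contained (no orthology, no reliance on the unproved Proposition 2.2) and delivers the fixed point in closed form --- it even survives $P = I$, where the paper's triangle $A'B'C'$ degenerates to a point --- though it needs $P$ off the circumcircle so that $\omega_P$ and $M_0^{-1}$ exist (invertibility of $M_0 \pm xU$ is harmless, failing only when the corresponding circle does not exist and the claim is vacuous). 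What it does not buy is the geometric structure the paper extracts along the way --- the line $q$ through $Q$ perpendicular to the Steiner line of $P$, and the isogonal pairs $Q_i, O_i$ --- which is exactly what the paper reuses to prove Theorem 3.1 and the open problem; identifying your $F$ with the orthocenter of $A'B'C'$ would require additional work if one wished to continue down that path.
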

The proof will reveal the geometric nature of the triangles $A_1B_1C_1$ and $A_2B_2C_2$ that makes this theorem true. The approach taken here involves identifying the fixed point and proving that the it exerts equal power with respect to the triangles' circumcircles. This was done by applying a metric lemma that relates circumradius of a reflection triangle and distances to an orthocenter. The metric conditions in the theorem translates straight-forwardly to the application of the lemma with minimal algebraic manipulations and calculations, granting the theorem a truely synthetic proof. 

\subsection{Preliminary Definitions and Propositions}
\begin{definition}
	Two triangles $ABC$ and $DEF$ are said to be \textbf{orthologic} if the perpendiculars from $A,B,C$ to sides $EF, DF, DE$ are concurrent. It can be shown that this relation is symmetric, namely the perpendiculars from $D,E,F$ to sides $BC,AC,AB$ are also concurrent. The first point of concurrency is called the orthologic center of $ABC$ w.r.t $DEF$ and is denoted $O_{ABC}(DEF)$.
\end{definition}

\begin{definition}
	Given a triangle $ABC$ and a point $P$ on its circumcircle. The reflection of $P$ over the sides of $ABC$ lie on a line called \textbf{Steiner's line} of $P$ w.r.t $ABC$. In other words, the Steiner's line is the Simson's line of $P$ dilated by a factor of 2. It is well known that the Steiner's line contains the orthocenter of $ABC$, and is perpendicular to isogonals of $P$ w.r.t $\angle A,\angle B , \angle C$.
\end{definition}

We now introduce an important property on a local configuration in the problem. 

\begin{proposition}
	Given a triangle $ABC$, suppose $B',C'$ are two points on $AB,AC$ respectively such that $BB' = CC'$ and they lie on opposite sides of $BC$. If $D$ is the intersection of the angle bisector of $\angle A$ with $\odot (ABC)$, prove that $DB' = DC'$
\end{proposition}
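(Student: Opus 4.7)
The plan is to exploit the classical fact that $D$, being the second intersection of the $A$-bisector with $\odot(ABC)$, is the midpoint of arc $BC$ not containing $A$, and then to show the two triangles $\triangle DBB'$ and $\triangle DCC'$ are congruent by SAS.

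First I would record the two consequences of $D$ being the arc midpoint that power the argument: $DB = DC$, and the inscribed-angle computation $\angle DBC = \angle DCB = \tfrac{1}{2}\angle A$. Combined with the triangle angles this gives $\angle DBA = \tfrac{1}{2}\angle A + \angle B$ and $\angle DCA = \tfrac{1}{2}\angle A + \angle C$, which are the two \emph{signed} directions I need in order to read off $\angle DBB'$ and $\angle DCC'$ depending on which side of $BC$ each primed point lies.

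Next I would carry out the (very short) case analysis. If $B'$ lies on the same side of $BC$ as $A$, then $\angle DBB' = \angle DBA = \tfrac{1}{2}\angle A + \angle B$; if $B'$ lies on the opposite side, then $B'$ is on the ray from $B$ away from $A$, so $\angle DBB' = 180^\circ - \angle DBA = \tfrac{1}{2}\angle A + \angle C$. The analogous dichotomy for $C'$ gives $\angle DCC' \in \{\tfrac{1}{2}\angle A + \angle C,\ \tfrac{1}{2}\angle A + \angle B\}$. The hypothesis that $B'$ and $C'$ lie on \emph{opposite} sides of $BC$ is precisely what forces one of these angles to be $\tfrac{1}{2}\angle A + \angle B$ and the other to be $\tfrac{1}{2}\angle A + \angle C$ with the roles swapped, giving $\angle DBB' = \angle DCC'$. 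This matching is the whole point of the opposite-sides hypothesis, and any other configuration would fail unless $\angle B = \angle C$.

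Finally, since $DB = DC$, $BB' = CC'$, and $\angle DBB' = \angle DCC'$, the triangles $\triangle DBB'$ and $\triangle DCC'$ are SAS-congruent, so $DB' = DC'$. The only real obstacle is making sure the directed-angle bookkeeping in the case split is done cleanly so that the ``opposite sides'' hypothesis is used exactly once, in the step that equates $\angle DBB'$ with $\angle DCC'$; once that is in place the congruence is immediate and no further computation is needed.
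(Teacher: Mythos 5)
Your proposal is correct and follows essentially the same route as the paper: both arguments use that $D$, as the arc midpoint, gives $DB = DC$, establish $\angle DBB' = \angle DCC'$ from the opposite-sides hypothesis, and conclude by the SAS congruence $\triangle DBB' \cong \triangle DCC'$. The only difference is that you spell out explicitly (via the values $\tfrac{1}{2}\angle A + \angle B$ and $\tfrac{1}{2}\angle A + \angle C$ and the case split) the angle equality that the paper dismisses with ``by construction of $B', C'$,'' which is a reasonable amount of added rigor but not a different proof.
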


\begin{proof}
Since $AD$ bisects $\angle BAC$, we have $DB = DC$ and by construction of $B',C'$, $\angle B'BD = \angle C'CD$. Therefore, $BB' = CC'$ implies that $\triangle DBB'\cong \triangle DCC'$ and thus $DB' = DC'$. 
\end{proof}

The following will be the central property used to compute the power of the fixed point with respect to the circumcircles of $A_1B_1C_1$ and $A_2B_2C_2$. Its proof is slightly complicated and will be omitted here. 
\begin{proposition}\cite{orthocenterlemma}
	Let $H,O$ be the orthocenter and circumcenter of $\triangle ABC$. Suppose $P,Q$ are isogonal conjugates w.r.t $\triangle ABC$. Prove that 
	\[
	R_{PQ}^2 = HQ^2 + HP^2 + |R^2-OH^2|
	\]
	where $R, R_{PQ}$ are the circumradii of $\triangle ABC$ and the reflection triangle of $P$(or $Q$) w.r.t $\triangle ABC$ respectively. Note that $|R^2-OH^2|$ is simply the power of $H$ w.r.t $\odot (ABC)$.
\end{proposition}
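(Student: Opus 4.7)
My plan is to reduce the claim to an algebraic identity about isogonal conjugates that follows from Euler's formula.

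First, I would establish the underlying geometric fact that the reflection triangle of $P$ with respect to $\triangle ABC$ has its circumcenter at $Q$. The classical pedal-circle theorem asserts that $P$ and its isogonal conjugate $Q$ share a common pedal circle whose center is the midpoint $M = \tfrac{1}{2}(P+Q)$. Since the reflection triangle is the image of the pedal triangle of $P$ under the homothety of ratio $2$ centered at $P$, the circumcircle of the reflection triangle is centered at $2M - P = Q$ with radius $R_{PQ} = 2 r_p$, twice the pedal-circle radius. Thus $R_{PQ}$ equals the distance from $Q$ to the reflection of $P$ over any one side, and the proposition becomes the computation of this distance squared.

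Second, I would place $BC$ on the $x$-axis. Writing $P = (p_1, p_2)$ and $Q = (q_1, q_2)$, the reflection of $P$ over $BC$ is $(p_1, -p_2)$, and a one-line calculation gives
\[
R_{PQ}^2 \;=\; (q_1 - p_1)^2 + (q_2 + p_2)^2 \;=\; PQ^2 + 4\, p_2 q_2.
\]
The factor $p_2 q_2 = d(P, BC)\cdot d(Q, BC)$ is the product of signed distances from $P$ and $Q$ to side $BC$. Because isogonal conjugation sends trilinear coordinates $(\alpha : \beta : \gamma)$ to $(1/\alpha : 1/\beta : 1/\gamma)$, the three products $d(P, \ell)\, d(Q, \ell)$ over $\ell \in \{BC, CA, AB\}$ all take a common value $\mu$. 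Hence $R_{PQ}^2 = PQ^2 + 4\mu$, which is now a side-independent identity.

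Third, applying the polarization identity $PQ^2 = HP^2 + HQ^2 - 2\,\overrightarrow{HP}\cdot\overrightarrow{HQ}$, the proposition reduces to the intrinsic identity
\[
4\mu - 2\,\overrightarrow{HP}\cdot\overrightarrow{HQ} \;=\; R^2 - OH^2.
\]
As a sanity check, setting $P = O$ and $Q = H$ gives $\overrightarrow{HO}\cdot\overrightarrow{HH} = 0$ and $\mu = (R\cos A)(2R\cos B\cos C) = 2R^2\cos A\cos B\cos C$, so the identity reproduces Euler's formula $OH^2 = R^2(1 - 8\cos A\cos B\cos C)$.

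The main obstacle is proving the reduced identity for arbitrary $P$. The cleanest route I know is a direct computation in normalized barycentric coordinates: writing $P = (u:v:w)$ with $u+v+w=1$ and $Q = (a^2/u : b^2/v : c^2/w)/N$ where $N = a^2/u + b^2/v + c^2/w$, one obtains the compact formula $\mu = 4\Delta^2/N$ and then invokes the standard barycentric expressions for $OP^2$, $OQ^2$, and $\overrightarrow{HP}\cdot\overrightarrow{HQ}$ to verify that the combination collapses to $R^2 - OH^2$ after using Euler's formula. The bookkeeping is messy but not conceptually subtle, which presumably explains why the authors preferred to cite the result rather than reproduce its proof.
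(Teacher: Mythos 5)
The paper never proves this proposition: it is explicitly omitted and cited to an external note of Liang, so there is no in-paper argument to compare against and your proposal must stand on its own. It essentially does. Your first step (the circumcenter of the reflection triangle of $P$ is $Q$, with $R_{PQ}=2r_p$, via the common pedal circle and the homothety of ratio $2$ at $P$) is the same well-known fact the paper itself invokes inside the proof of Lemma 2.2; your second step, $R_{PQ}^2 = PQ^2 + 4\,d(P,\ell)\,d(Q,\ell)$ with the products of signed distances to the three sides sharing a common value $\mu$, is correct, including the normalization $\mu = 4\Delta^2/N$. The reduced identity $4\mu - 2\,\overrightarrow{HP}\cdot\overrightarrow{HQ} = R^2 - OH^2$ is indeed a true identity \textemdash I verified it independently; besides your $P=O$ check, the self-conjugate case $P=Q=I$ gives $4r^2 - 2HI^2 = 8R^2\cos A\cos B\cos C$, the classical formula for $HI^2$, and with the circumcenter at the origin and $H=\vec A + \vec B + \vec C$ it becomes a polynomial identity in the normalized barycentrics of $P$. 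So your plan genuinely closes.

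Two remarks. First, the only shortfall is that you assert rather than execute the final barycentric collapse; as written this is a correct reduction with the crux identity left as a claimed computation, and a complete writeup would need that verification displayed (it does go through). Second, your argument proves the \emph{signed} identity $R_{PQ}^2 = HP^2 + HQ^2 + (R^2 - OH^2)$, and that is the correct statement: the absolute value in the paper's formulation fails for obtuse triangles, as the case $P=O$, $Q=H$ shows, where $R_{PQ}=R$ while the absolute-value right-hand side would give $2OH^2 - R^2$. The discrepancy is harmless for the paper's application, since in the proof of Theorem 2.1 the same constant $p$ is added to both equations and cancels upon subtraction, but your signed version is the one that should be recorded.
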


\subsection{Proof}

Let $I$ be the incenter of $ABC$. From $P$ drop pedals onto $AI,BI,CI$ and denote them by $A',B',C'$ respectively. With $P,A'\in \odot(AB_PC_P)$, proposition 2.1 states that $A'B_1 = A'C_1$ and $A'B_2 = A'C_2$. Symmetrically, we can obtain $B'A_i = B'C_i, C'A_i = C'B_i$ for $i = 1,2$. To rid the suspense, we claim that the fixed point that lies on the radical axis of $\odot(A_1B_1C_1), \odot(A_2B_2C_2)$ is the orthocenter of $A'B'C'$, which we will denote by $H'$. This first lemma opens the door for $A_1B_1C_1$ to relate with other triangles.

\begin{lemma}
	Triangles $A'B'C'$ and $A_1B_1C_1$ are orthologic. In addition, the two orthologic centers between the triangles are isogonal conjugates wrt $A'B'C'$.	
\end{lemma}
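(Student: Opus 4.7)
The plan is, in broad strokes, to derive the orthology via perpendicular bisectors using Proposition 2.1, then pin down the isogonal conjugacy with a targeted angle chase rooted in the pedal structure of $P$.

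First, apply Proposition 2.1 at each vertex of $\triangle ABC$. Since $A'$ is the pedal of $P$ on $AI$, $\angle PA'A = 90^\circ$, so $A'$ lies on the circle of diameter $AP$, which is exactly $\odot(AB_PC_P)$; moreover $A' \in AI$, the bisector of $\angle B_P A C_P = \angle BAC$. The pair $(B_1, C_1)$ lies on sides $AB_P, AC_P$ of $\triangle AB_PC_P$ at equal distances $x$ from $B_P, C_P$ and on opposite sides of the chord $B_PC_P$ (by the ray conditions), so Proposition 2.1 with $D = A'$ yields $A'B_1 = A'C_1$. Analogous applications at $B$ and $C$ produce
\[
A'B_1 = A'C_1,\quad B'A_1 = B'C_1,\quad C'A_1 = C'B_1.
\]
Thus the perpendicular from $A'$ to $B_1C_1$ is the perpendicular bisector of $B_1C_1$, and similarly at $B', C'$. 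These three perpendicular bisectors of the sides of $\triangle A_1B_1C_1$ meet at its circumcenter $O_1$, establishing orthology and identifying $O_{A'B'C'}(A_1B_1C_1) = O_1$. The symmetry of orthology (Definition 2.1) then gives the companion center $O_2 := O_{A_1B_1C_1}(A'B'C')$.

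For the isogonal conjugacy claim, I plan to verify the defining directed-angle symmetry $\angle(A'B', A'O_1) \equiv \angle(A'O_2, A'C') \pmod{\pi}$ at $A'$ together with its two analogues at $B'$ and $C'$. The line $A'O_1$ is transparent: it is the perpendicular bisector of $B_1C_1$, so $A'O_1 \perp B_1C_1$, and it also bisects the isosceles apex angle $\angle B_1 A' C_1$. The line $A'O_2$ is the main obstacle, since $O_2$ is characterized by perpendiculars \emph{from} $A_1, B_1, C_1$ with no direct reference to $A'$. My proposed handle on $A'O_2$ is the concyclicity of $A', B', C', P, I$ on the circle of diameter $PI$, combined with the arc-midpoint relation $A'B_P = A'C_P$ (a consequence of $A'$ being the intersection of the bisector $AI$ with $\odot(AB_PC_P)$) and its cyclic analogues. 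These encode a strong reflective symmetry of the ambient configuration about each of the bisectors $AI, BI, CI$ and should translate the defining perpendicularity $A_1 O_2 \perp B'C'$ into the required angle equality at $A'$.

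The hardest step will be turning this symmetry into an intrinsic description of $O_2$ inside $\triangle A'B'C'$, so that $A'O_2$ becomes computable. I foresee two complementary routes: (a) realize $O_2$ as the image of $O_1$ under a distinguished involution of the pedal circle of $P$ (for instance, reflection across the diameter $PI$), turning the isogonal claim into a concrete reflection identity; or (b) execute a trigonometric angle chase inside the three circles $\odot(AB_PC_P),\odot(BA_PC_P),\odot(CA_PB_P)$ to evaluate $\angle(A'C', A'O_2)$ in terms of the angles of $\triangle ABC$ and the position of $P$, then match it against the already-known $\angle(A'B', A'O_1) = \angle(A'B', B_1C_1) + \tfrac{\pi}{2}$. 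Once verified at $A'$, the cyclic symmetry built into the construction automatically propagates the identity to $B'$ and $C'$, completing the proof.
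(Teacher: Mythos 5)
Your first half is correct and coincides exactly with the paper's argument: applying Proposition 2.1 inside $\odot(AB_PC_P)$ (where $A'$ lies on the circle with diameter $AP$ and on the bisector of $\angle B_PAC_P$) and its two analogues gives $A'B_1 = A'C_1$, $B'A_1 = B'C_1$, $C'A_1 = C'B_1$, so the perpendiculars from $A',B',C'$ to the sides of $\triangle A_1B_1C_1$ are those sides' perpendicular bisectors and concur at the circumcenter $O_1$, proving orthology with $O_{A'B'C'}(A_1B_1C_1) = O_1$.

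The second half, however, has a genuine gap: you never obtain an intrinsic description of the second orthology center (your $O_2$; the paper calls it $Q_1$ and reserves $O_2$ for the circumcenter of $A_2B_2C_2$), and neither of your proposed routes would close it. Route (a) is unfounded: neither $O_1$ nor $Q_1$ lies on the pedal circle $\odot(A'B'C')$ in general, so an ``involution of the pedal circle'' does not act on them, and isogonal conjugation is not induced by reflection across $PI$. Route (b) is a promissory note with no computation supplied. The missing idea, which is how the paper actually concludes, is to show that $A_1B_1C_1$ is the \emph{reflection triangle} of $Q_1$ w.r.t. $\triangle A'B'C'$: since $B_1Q_1\perp A'C'$ and $C_1Q_1\perp A'B'$, one computes
\[
\angle (Q_1B_1, Q_1C_1) = \angle (A'C', A'B') = \angle (IC', IB') = 90^\circ - \tfrac{\angle A}{2} = \tfrac{1}{2}\,\angle (A'B_1, A'C_1),
\]
using precisely the concyclicity of $A',B',C',P,I$ that you invoke; hence $A'$ is the circumcenter of $\triangle B_1C_1Q_1$, and together with $B_1Q_1\perp A'C'$ this forces $B_1$ to be the reflection of $Q_1$ over $A'C'$, similarly $C_1$ over $A'B'$, and by symmetry $A_1$ over $B'C'$. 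The well-known fact that the circumcenter of the reflection triangle of a point is that point's isogonal conjugate then yields the conjugacy of $O_1$ and $Q_1$ at a stroke. Your ``reflective symmetry about the bisectors'' intuition gestures in this direction, but without the reflection-triangle identification the vertexwise angle verification you defer to has no foothold on $A'O_2$ — and that identification is not optional bookkeeping: the paper reuses it in Lemma 2.3 (reflecting $A_1,A_2,A_P$ over $B'C'$) and it is what makes Proposition 2.2 applicable in the main proof.
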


\begin{proof}
Let $O_1$ denote the circumcenter of $A_1B_1C_1$. We claim that $O_1 = O_{A'B'C'}(A_1B_1C_1)$. Indeed, $A'$ lies on the perpendicular bisector of $B_1C_1$, which $O_1$ also lies on, so $A'O_1\perp B_1C_1$. Symmetrically, we can obtain $B'O_1\perp A_1C_1, C'O_1\perp A_1B_1\implies$ $A'B'C'$ and $A_1B_1C_1$ are orthologic. 

Now Let $Q_1 = O_{A_1B_1C_1}(A'B'C')$. Since $B_1Q_1\perp A'C', C_1Q_1\perp A'B'$, thus 
\begin{align*}
\angle (Q_1B_1, Q_1C_1) &=  \angle (A'C',A'B') \\
&=  \angle (IC',IB') \tag{$A',B',C',P,I$ are concyclic}\\
&=  90-\frac{\angle A}{2}\\
&= \frac{180-\angle A}{2}\\
&= \frac{\angle (A'B_1, A'C_1)}{2}.
\end{align*}
It follows that $A'$ is the circumcenter of $B_1C_1Q_1$. Thus, $B_1,C_1$ are the reflection of $Q_1$ over $A'C',A'B'$ respectively. It follows by symmetry that $A_1$ is the reflection of $Q_1$ over $B'C'$. These result can be summerized as follows: $A_1B_1C_1$ is the reflection triangle of $Q_1$ w.r.t $A'B'C'$. By a well-known fact of isogonal conjugates, $O_1$, the circumcenter of the reflection triangle of $Q_1$, is isogonal conjugate with $Q_1$ w.r.t $A'B'C'$; this proves second part of the proposition. 
\end{proof}

\begin{figure}[H]
	\centering
	\includegraphics[scale = 0.7]{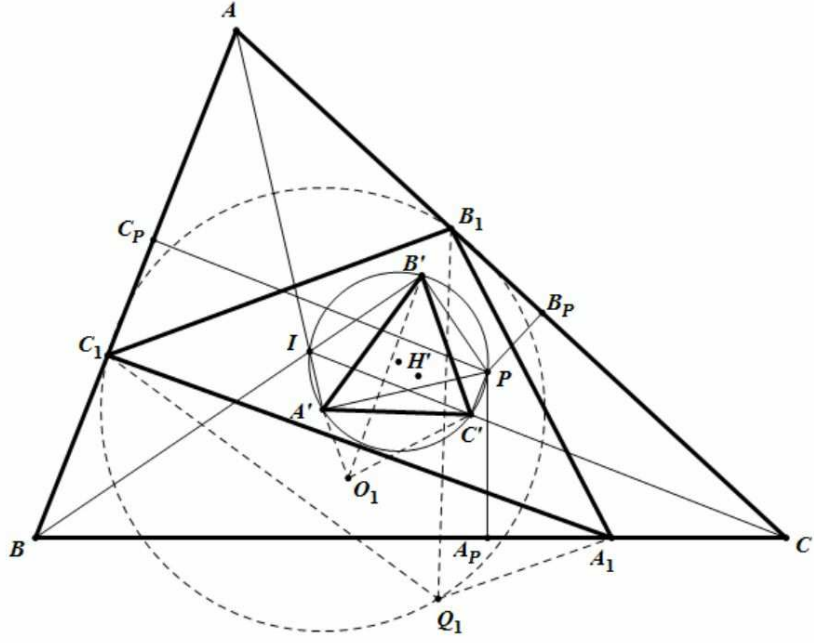}
	\caption{Proof of Lemma 2.2. Initial set up for the solution.}
	\label{L22Proof}
\end{figure}

\begin{lemma}
	If $Q_1 = O_{A_1B_1C_1}(A'B'C')$ and $Q_2 = O_{A_2B_2C_2}(A'B'C')$, then $H'Q_1 = H'Q_2$.	
\end{lemma}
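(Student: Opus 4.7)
The plan is to exhibit $Q_1$ and $Q_2$ as endpoints of a segment on a common line $\ell$ with a specific midpoint $M$, then reduce the claim $H'Q_1 = H'Q_2$ to a chord parallelism in the circumcircle $\omega$ of $\triangle A'B'C'$.

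Applying Lemma 2.2 symmetrically to $\triangle A_2B_2C_2$, we obtain $Q_i = r_{B'C'}(A_i) = r_{A'C'}(B_i) = r_{A'B'}(C_i)$ for $i = 1, 2$. Since $A_1, A_2 \in BC$, $B_1, B_2 \in CA$, and $C_1, C_2 \in AB$, both $Q_1$ and $Q_2$ lie on each of the three reflected lines $r_{B'C'}(BC)$, $r_{A'C'}(CA)$, $r_{A'B'}(AB)$; as the two distinct points $Q_1, Q_2$ determine a unique line, these three reflected lines must coincide as a common $\ell$ containing $Q_1, Q_2$. Because $A_P$ is the midpoint of $A_1A_2$ by the equidistance condition, the midpoint of $Q_1Q_2$ is $M := r_{B'C'}(A_P)$, which by the same symmetry also equals $r_{A'C'}(B_P)$ and $r_{A'B'}(C_P)$.

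The equality $H'Q_1 = H'Q_2$ holds iff $H'M \perp \ell$. Applying the isometry $r_{B'C'}$ (which sends $M \mapsto A_P$ and $\ell \mapsto BC$), this becomes $r_{B'C'}(H') A_P \perp BC$, i.e. $r_{B'C'}(H')$ lies on the perpendicular to $BC$ through $A_P$, which is precisely the line $PA_P$. By the classical fact that the reflection of an orthocenter over a side of a triangle lies on its circumcircle, $r_{B'C'}(H') \in \omega$. Now $\omega$ is precisely the circle with diameter $PI$, since $A', B', C'$ are pedals of $P$ on $AI, BI, CI$, forcing $\angle PA'I = \angle PB'I = \angle PC'I = 90^\circ$. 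Thales in $\omega$ then gives $\angle I r_{B'C'}(H') P = 90^\circ$, so $r_{B'C'}(H') \in PA_P$ if and only if the chord $I r_{B'C'}(H')$ of $\omega$ is parallel to $BC$.

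The main obstacle is proving this parallelism. Via the inscribed angle theorem at $A' \in \omega$, it is equivalent to the directed angle identity
\[
\angle(AI, B'C') \equiv \angle(PI, BC) \pmod{\pi}.
\]
Inscribed angle chasing in the cyclic quadrilateral $B'PC'I$ on $\omega$ (using $\angle PB'I = \angle PC'I = 90^\circ$) yields $\angle(AI, B'C') = \angle AIB + \angle PIC - 90^\circ$ after rewriting $\angle(IB, B'C') = \angle(IP, IC) - 90^\circ$. The right-hand side $\angle(PI, BC)$ is evaluated using that $I$ is the incenter, so $\angle AIB = 90^\circ + C/2$ and $\angle AIC = 90^\circ + B/2$; combined with the law of sines in $\triangle IBC$ giving $IB = a\sin(C/2)/\cos(A/2)$ and $IC = a\sin(B/2)/\cos(A/2)$, we can express the direction of $BC = C - B$ relative to $PI$. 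The two expressions for the directed angle then match after invoking the trigonometric identity $\tan((B+C)/4)\tan(45^\circ + A/4) = 1$, which follows from $A + B + C = 180^\circ$. This closes the chain of reductions and yields $H'Q_1 = H'Q_2$; combined with Proposition 2.2 applied to the isogonal pairs $(Q_1, O_1)$ and $(Q_2, O_2)$ in $\triangle A'B'C'$, this will establish that $H'$ has equal power with respect to $\odot A_1B_1C_1$ and $\odot A_2B_2C_2$, proving Theorem 2.1.
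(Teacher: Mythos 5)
Your chain of reductions is correct up to the final paragraph, and it essentially retraces the paper's proof in different clothing: your point $r_{B'C'}(H')$ is the paper's $H_A'$ (the second intersection of line $A'H'$ with $\odot(A'B'C')$), and your target condition ``$r_{B'C'}(H')\in PA_P$'' is precisely the claim the paper settles by angle chasing before reflecting the four collinear points $H_A',A_1,A_2,A_P$ over $B'C'$ onto $H',Q_1,Q_2,Q$. Your Thales reformulation (since $\omega$ has diameter $IP$, the condition becomes $IH_A'\parallel BC$), the equivalence with the directed-angle identity $\angle(AI,B'C')\equiv\angle(PI,BC)\pmod{\pi}$, and the inscribed-angle computation $\angle(IB,B'C')=\angle(IP,IC)-90^\circ$ in the cyclic quadrilateral $B'PC'I$ are all valid; I checked each numerically and they hold.

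The genuine gap is in how you propose to close the identity. You say $\angle(PI,BC)$ is ``evaluated'' from $\angle AIB=90^\circ+C/2$, $\angle AIC=90^\circ+B/2$, and the law of sines in $\triangle IBC$. That cannot work: $P$ is an \emph{arbitrary} point of the plane, so $\angle(PI,BC)$ is a free parameter, and no data internal to $\triangle IBC$ (the lengths $IB$, $IC$, the angles of $ABC$) can determine it. The identity $\tan((B+C)/4)\tan(45^\circ+A/4)=1$, while trivially true, has no legitimate role here; directions are compared by angle addition, not by side lengths, and its appearance signals that the computation was never actually carried out. The correct finish is one line and uses only what you already have: the $P$-dependence cancels, since (with one consistent directed-angle convention) $\angle(PI,BC)=\angle(IP,IC)+\angle(IC,CB)=\angle(IP,IC)+\tfrac{C}{2}$, while your left-hand side is $\angle(AI,B'C')=\angle(AI,IB)+\angle(IB,B'C')=\bigl(90^\circ+\tfrac{C}{2}\bigr)+\angle(IP,IC)-90^\circ=\angle(IP,IC)+\tfrac{C}{2}$. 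With this repair your argument is complete and matches the paper's mechanism exactly: the paper concludes $H'Q_1=H_A'A_1=H_A'A_2=H'Q_2$ by reflecting over $B'C'$, using that $H_A'$ lies on $PA_P$, the perpendicular bisector of $A_1A_2$; you reflect the perpendicular-bisector condition instead, which is the same idea with different bookkeeping.
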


\begin{proof}
By taking $x = 0$, $A_1B_1C_1, A_2B_2C_2$ coincide with $A_PB_PC_P$, so in light of Proposition Lemma 2.2, we define $Q = O_{A_PB_PC_P}(A'B'C')$. Let $A'H'\cap \odot (A'B'C') = H_A'\ne A'$, thus $H', H_A'$ are symmetric about $B'C'$. We claim that $H_A'\in PA_P$, which follows from angle chasing $\angle (PA', PA_P) = \angle (PA', PH_A')$. In the proof above, we showed that $A_1,Q_1$($A_2,Q_2$) are symmetric about $B'C'$. Therefore, a reflection over $B'C'$ produces $H_A'A_1A_2A_P \mapsto H'Q_1Q_2Q$, and the lemma follows.
\end{proof}

\begin{figure}[h]
	\centering
	\includegraphics[scale = 0.7]{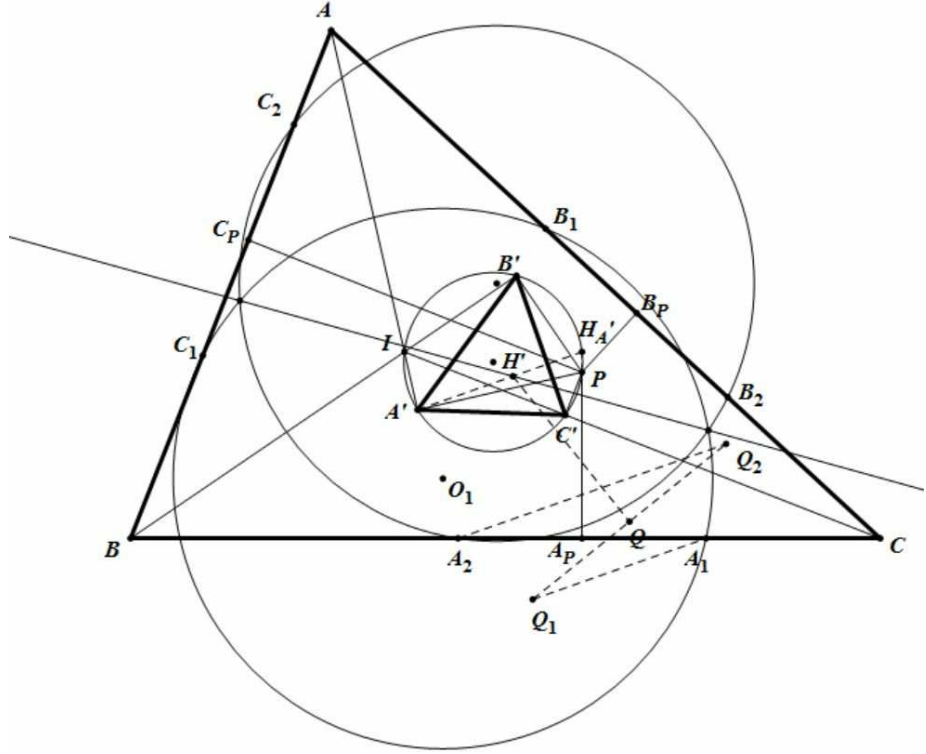}
	\caption{Proof of Lemma 2.3}
	\label{L23Proof}
\end{figure}

\begin{remark}
	Notice that we have proven that $H'Q$ is the Steiner's line of $P$ with respect to $A'B'C'$. In addition, as $A_1,A_2$ varies with $x$, $Q_1,Q_2$ varies on a fixed line through $Q$ and perpendicular to $H'Q$. This observation will be used in the next section.
\end{remark}

We are now ready to prove the main result.

\begin{proof}
We proved in Lemma 2.2 that $Q_1,O_1$ are isogonal conjugates w.r.t $A'B'C'$, and $A_1B_1C_1$ is the reflection triangle of $Q_1$ w.r.t $A'B'C'$. Applying Proposition 2.2, we have
\[
O_1A_1^2 = O_1H'^2+Q_1H'^2+p
\]
where $p$ is the power of $H'$ w.r.t $\odot (A'B'C')$.
Symmetrically, we can obtain
\[
O_2A_2^2 = O_2H'^2 + Q_2H'^2 + p
\]
Since Proposition asserts that $H'Q_1 = H'Q_2$, it follows by subtracting the two equations that
\[
O_1A_1^2 - O_1H'^2 = O_2A_2^2 - O_2H'^2.
\]
However, this equality is equivalent to $H'$ having equal power wrt $\odot (A_1B_1C_1)$ and $\odot (A_2B_2C_2)$. This implies $H'$ lies on the radical axis of the two circles and the result is proven.
\end{proof}

\section{Special Case: When $P$ lies on $IO$}

This section aims to investigate the case where $P$ lies on $IO$, where $I,O$ are incenter and circumcenter of $ABC$. The main result is stated as follows.  

\begin{theorem}
	Let $P$ be a point that lies on $IO$, where $I$ and $O$ are the incenter and circumcenter of $ABC$ respectively. Let $A_PB_PC_P$ be the pedal triangle of $P$ on $ABC$. Define $A_1,A_2,B_1,B_2,C_1,C_2$ such that for some $x > 0$ 
	\[ 
	x = A_PA_1 = A_PA_2 = B_PB_1 = B_PB_2 = C_PC_1 = C_PC_2,
	\]
	with $A_1,B_1,C_1$ lying on rays $A_PC, B_PA, C_PB$, and $A_2,B_2,C_2$ lying on rays $A_PB, B_PC, C_PA$. As $x$ varies, the radical axis of $\odot A_1B_1C_1$ and $\odot A_2B_2C_2$ is a fixed line.
\end{theorem}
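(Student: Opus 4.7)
The plan is to leverage Theorem 2.1 and sharpen it. That theorem already furnishes one fixed point on the radical axis of $\odot(A_1B_1C_1)$ and $\odot(A_2B_2C_2)$, namely the orthocenter $H'$ of $A'B'C'$, where $A',B',C'$ are the pedals of $P$ on $AI, BI, CI$. Since $P$ is fixed throughout Theorem 3.1, $H'$ is fixed too. To upgrade ``passes through a fixed point'' to ``is a fixed line'' it suffices to show that the direction of $O_1O_2$ is independent of $x$; the radical axis, being perpendicular to $O_1O_2$ and passing through $H'$, is then uniquely determined.

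First I would reformulate the problem on the triangle $A'B'C'$. By Lemma 2.2, $O_i$ is the isogonal conjugate in $A'B'C'$ of $Q_i$. By the remark after Lemma 2.3, $Q_1, Q_2$ lie on the fixed line $m$ through $Q$ perpendicular to the Steiner line $H'Q$, and the same reflection-over-$B'C'$ argument that proved $H'Q_1 = H'Q_2$ actually shows $Q_1, Q_2$ are symmetric about $Q$ on $m$ (they are images under that reflection of the pair $A_1, A_2$, which is symmetric about $A_P$ on $BC$). Applying isogonal conjugation with respect to $A'B'C'$, the pair $\{O_1, O_2\}$ sweeps out a pencil of chords of the circumconic $\mathcal{C} := \mathrm{isog}_{A'B'C'}(m)$---the chords belonging to the projective involution on $\mathcal{C}$ whose fixed points are $f(Q)$ (the circumcenter of the pedal triangle $A_PB_PC_P$ of $P$) and $f(\infty_m)$, a point on $\odot(A'B'C')$. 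By the classical theorem on conic involutions, these chords all pass through the pole of $f(Q)f(\infty_m)$ with respect to $\mathcal{C}$, and are parallel exactly when this pole is at infinity, i.e.\ exactly when $f(Q)f(\infty_m)$ is a diameter of $\mathcal{C}$.

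So the whole theorem boils down to one clean claim: when $P \in IO$, the chord $f(Q)f(\infty_m)$ is a diameter of the circumconic $\mathcal{C}$. The new geometric fact available is that $A', B', C', I, P$ are concyclic on the circle with diameter $IP$, so $\odot(A'B'C')$ has a diameter lying along $IO$---an alignment that the general Theorem 2.1 does not see. I would establish the diameter claim by invoking Proposition 3.3, which converts isogonal conjugation in $A'B'C'$ into polar duality with respect to a suitable circumconic of $A'B'C'$; the hypothesis $P \in IO$ then propagates through this duality and forces $f(Q)f(\infty_m)$ to pass through the center of $\mathcal{C}$. The main obstacle is exactly this final translation: identifying the correct circumconic witnessing the polarity in Proposition 3.3 and turning the alignment $IP \subset IO$ into the statement that the specific chord of $\mathcal{C}$ is a diameter. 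Once that is done, Theorem 2.1 supplies $H'$ and the fixed radical axis is pinned down.
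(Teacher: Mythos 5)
Your reduction is sound and, up to the final step, coincides with the paper's own route: the fixed point $H'$ comes from Theorem 2.1; the reflection argument of Lemma 2.3 indeed shows $Q_1,Q_2$ are symmetric about $Q$ on the fixed line $q$ perpendicular to the Steiner line $H'Q$; and after isogonal conjugation in $\triangle A'B'C'$ the chords $O_1O_2$ of the circumconic $\mathcal{C}$ all pass through the pole of the chord joining the images of $Q$ and $\infty_q$, so that the direction of $O_1O_2$ is fixed precisely when that chord is a diameter of $\mathcal{C}$. This is exactly the content of the paper's Proposition 3.2 together with Corollary 3.2.1, where the harmonic relation $A(T',L';S',R')=A(T,l_\infty;S,R)=-1$ plays the role of your involution on the conic. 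So the projective half of your proposal is correct and faithful to the paper.

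The genuine gap is that you never prove the diameter claim, and Proposition 3.3 cannot produce it for you. By Corollary 3.2.1, the chord $f(Q)f(\infty_q)$ is a diameter of $\mathcal{C}$ if and only if $Q$ and $\infty_q$ are conjugate with respect to $\odot(A'B'C')$, i.e.\ if and only if the polar of $\infty_q$ --- the diameter of $\odot(A'B'C')$ perpendicular to $q$ --- passes through $Q$. Since $q$ passes through $Q$ perpendicular to the Steiner line $H'Q$, this says exactly that the circumcenter $O'$ of $A'B'C'$ lies on that Steiner line, i.e.\ that the Steiner line of $P$ w.r.t.\ $A'B'C'$ is the Euler line of $A'B'C'$. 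That Euclidean statement is the paper's Proposition 3.1, and it is the real substance of the special case $P\in IO$: the paper proves it by observing that the configurations $A'B'C'\cup P$ are homothetic through $I$ as $P$ runs over $IO$, reducing to $P=B_e$ (the Bevan point), and then carrying out the nontrivial synthetic argument of Lemma 3.2 in the excentral/orthic picture (reflections of the nine-point center, the similarity $FBC\sim OC^*B^*$, etc.). Your proposed substitute --- that the center of $\odot(A'B'C')$, being the midpoint of $IP$, lies on $IO$ --- is true but does not by itself place $O'$ on the line $H'Q$; nothing in Proposition 3.3 ``propagates'' $P\in IO$ into that alignment, since Proposition 3.3 is purely a translation device converting a circumcircle-conjugacy into a statement about the center of $\mathcal{C}$ once the conjugacy is already established. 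The ``main obstacle'' you flag at the end is therefore not a routine bookkeeping step but precisely the missing key lemma, without which the proof does not close.
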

Proving this would bring us one step closer to deducing our open problem. Surprisingly, efforts to establish this theorem lead to explorations into fruitful projective results that strayed relative far from the theorem at hand. 

We now pick up where we left off from the previous section to analyze the current theorem. Since we proved that the radical axis contains a fixed point, the natural next step is to show that the radical axis is always parallel to a fixed line. This is equivalent to showing that the line connecting circumcenters of $\triangle A_1B_1C_1$ and $\triangle A_2B_2C_2$, $O_1$ and $O_2$ respectively, is always parallel to a fixed line as $x$ varies. Keeping the same notations as the previous section, recall from Lemma 2.2 that $O_1,O_2$ are isogonal conjugates of $Q_1,Q_2$, a pair of points on a line through $Q$ (defined as $O_{A_PB_PC_P}(A'B'C')$) and perpendicular to the Steiner's line of $P$ w.r.t $\triangle A'B'C'$. Additionally, $Q_1,Q_2$ are equidistant from $Q$. Therefore, it suffices to find out what is special about the point $Q$ and Steiner's line of $P$ when $P$ lies on $IO$. The next proposition helps answer this.

\begin{proposition}
	Suppose $P$ is a point on $IO$. Define $A',B',C'$ to be the pedals from $P$ onto $AI,BI,CI$ respectively. Then the Steiner's line of $P$ w.r.t. $\triangle A'B'C'$ is the Euler line of $\triangle A'B'C'$.
\end{proposition}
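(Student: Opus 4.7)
The plan is to use complex coordinates and exploit a hidden spiral similarity. First I would place $\odot(ABC)$ as the unit circle with the standard parameterization $A = a^{2}$, $B = b^{2}$, $C = c^{2}$ (with sign conventions making the arc midpoints $M_{A} = -bc$, etc.), which puts the incenter at $I = -s_{2}$ and the circumcenter at $O = 0$, where $s_{k}$ denotes the $k$-th elementary symmetric polynomial in $a,b,c$. The hypothesis $P \in IO$ then becomes $P = \lambda s_{2}$ for some real $\lambda$.

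Using the chord--foot formula for the pedal of $P$ onto the chord $AM_{A}$ together with $\overline{P} = \lambda\,\overline{s_{2}} = \lambda s_{1}/s_{3}$ (this is where the reality of $\lambda$ is essential), I would simplify to
\[
A' \;=\; \tfrac{1}{2}\bigl((1+\lambda)\, s_{1}\, a \;-\; (1-\lambda)\, s_{2}\bigr),
\]
and similarly for $B'$ and $C'$. The decisive observation is that this exhibits the affine map $\phi(z) := \tfrac{1}{2}((1+\lambda) s_{1} z - (1-\lambda) s_{2})$ as a direct similarity sending the auxiliary triangle $abc$ (inscribed in the unit circle) onto $\triangle A'B'C'$; moreover $\phi^{-1}(P) = s_{2}/s_{1} =: p^{*}$ is independent of $\lambda$ and lies on the unit circle, by the identity $|s_{1}| = |s_{2}|$ valid whenever $|a|=|b|=|c|=1$.

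Since a direct similarity preserves Steiner and Euler lines, the proposition reduces to showing that the Steiner line of $p^{*}$ with respect to $\triangle abc$ is the Euler line of $\triangle abc$---equivalently, that $p^{*}$ is the Euler reflection point of $\triangle abc$. Both lines pass through the orthocenter $s_{1}$, so it suffices to verify that the Simson direction $(b-c)(p^{*}-a)/p^{*} = (b-c)(bc - a^{2})/s_{2}$ is a real multiple of $s_{1}$, i.e.\ that $(b-c)(bc-a^{2})/(s_{1}s_{2})$ equals its own conjugate---a short calculation using $\overline{a} = 1/a$, $\overline{s_{1}} = s_{2}/s_{3}$, and $\overline{s_{2}} = s_{1}/s_{3}$. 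The hard part is establishing the clean linear form for $A'$: for a generic point $P$ in the plane the analogous expression is quadratic in $a$ and no spiral similarity exists, so the miraculous linearity is precisely the encoding of $P \in IO$; once it is in hand, everything else reduces to the classical fact that a circumcircle point $X$ satisfying $X s_{1} = s_{2}$ has its Simson line parallel to the Euler line.
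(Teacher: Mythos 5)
Your proposal is correct, but it takes a genuinely different route from the paper's. I checked the computations: with $A=a^2$, $M_A=-bc$, $I=-s_2$, $P=\lambda s_2$ ($\lambda$ real), the foot-of-perpendicular formula gives $A'=\tfrac12\bigl(a^2-bc+\lambda s_2+\lambda s_1 a\bigr)$, which matches your linear form because $s_1a-s_2=a^2-bc$; the quantity $(b-c)(bc-a^2)/(s_1s_2)$ is indeed self-conjugate under $\overline a=1/a$, $\overline{s_1}=s_2/s_3$, $\overline{s_2}=s_1/s_3$; and since the Steiner line of $p^*=s_2/s_1$ and the Euler line of $\triangle abc$ both pass through the orthocenter $s_1$, parallelism forces coincidence. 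The paper instead argues synthetically: it observes that as $P$ varies on $IO$ the configurations $A'B'C'\cup P$ are homothetic through $I$, reduces to the single case $P=B_e$ (the Bevan point), restates the claim in the excentral triangle (Lemma 3.2), and proves it by reflecting the orthocenter and nine-point center of $A^*B^*C^*$ over $B^*C^*$ and establishing the collinearity of $O$, $D$, $N'$ via angle chases through an auxiliary similar triangle $FBC\sim OC^*B^*$. It is worth noting that both proofs hinge on a one-parameter similarity reduction, but yours goes further: the paper's homothety at $I$ only normalizes the choice of $P$, whereas your spiral similarity $\phi$ simultaneously eliminates $P$ and the pedal construction, landing on the $\lambda$-independent point $p^*=s_2/s_1$ on the unit circle --- so your argument actually proves the sharper, cleaner statement that $P$ is the Euler reflection point ($X(110)$) of $\triangle A'B'C'$, which the paper never makes explicit. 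What each buys: your proof is shorter and self-contained given standard unit-circle machinery, and the ``miraculous linearity'' cleanly isolates where $P\in IO$ is used; the paper's synthetic proof stays within its stated framework and its intermediate facts (e.g.\ $A^*D\perp B^*C^*$ and $D\in\odot(AB_HC_HH)$) are recycled in Lemma 4.1, so it carries downstream value for the resolution of the open problem. In a final write-up you should exclude the degenerate cases $\lambda=-1$ (then $P=I$ and $A'B'C'$ collapses to a point) and $s_1=0$ (equilateral $ABC$, where $I=O$ and the line $IO$ is undefined), so that $\phi$ is a genuine direct similarity; with that caveat the argument is complete.
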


Notice that as $P$ moves along $IO$, the corresponding configurations for $A'B'C'\cup P$ are homothetic through $I$. Therefore, it suffices to prove that this proposition is true for one point on $IO$. For the sake of simplicity, we will choose $P\equiv B_e$ the Bevan point of $\triangle ABC$, i.e. if $I_AI_BI_C$ is the excenter triangle of $ABC$, then $B_e,O,I$ are its circumcenter, nine-point center, and orthocenter respectively. Thus we can restate Proposition 3.1 in terms of the excenter triangle.

\begin{lemma}
	Given a triangle $ABC$ with circumcenter $O$, nine-point center $N$, and orthocenter $H$. Define $A^*,B^*,C^*$ to be the pedals from $O$ onto $AH,BH,CH$. Then the Steiner's line of $P$ w.r.t $A^*B^*C^*$ is the Euler line of $\triangle A^*B^*C^*$.  
\end{lemma}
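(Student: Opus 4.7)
The plan is to establish a spiral similarity between $\triangle ABC$ and $\triangle A^*B^*C^*$, and then reduce the Steiner-line identity on $\triangle A^*B^*C^*$ to the corresponding identity for a designated point on the circumcircle of $\triangle ABC$.

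First I would observe that $A^*, B^*, C^*$ all lie on the circle $\Omega$ with diameter $OH$, centered at the nine-point center $N$: since $A^*$ is the pedal of $O$ on the altitude $AH$, the angle $\angle OA^*H$ is right, so $A^*$ lies on that circle, and the same applies to $B^*$ and $C^*$. In particular $O$ and $H$ are themselves antipodal points of $\Omega$, and $\Omega$ is the circumcircle of $\triangle A^*B^*C^*$.

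Next, from $OA^* \perp AH \perp BC$ I get $OA^* \parallel BC$, and symmetrically $OB^* \parallel CA$ and $OC^* \parallel AB$. Consequently $\angle B^*OC^* = \angle A$, and by the inscribed-angle theorem applied at the cocircular point $A^* \in \Omega$, $\angle B^*A^*C^* = \angle A$; analogous identities at the other vertices yield $\triangle A^*B^*C^* \sim \triangle ABC$ via an orientation-reversing spiral similarity $\sigma$. Since $\sigma$ takes circumcenter to circumcenter and orthocenter to orthocenter, it sends $O \to N$ and $H \to H^*$, so the Euler line $OH$ of $ABC$ maps to the Euler line $NH^*$ of $A^*B^*C^*$; furthermore, $\sigma$ carries the circumcircle of $ABC$ onto $\Omega$.

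The main obstacle is the final step: checking that the Steiner line of $O$ w.r.t.\ $\triangle A^*B^*C^*$ has the same direction as the Euler line $NH^*$. Since both lines already pass through $H^*$, this parallelism would force equality. Using the equivariance of the Simson/Steiner construction under similarities and setting $O' := \sigma^{-1}(O)$, a point on the circumcircle of $\triangle ABC$, the claim becomes equivalent to: the Steiner line of $O'$ w.r.t.\ $\triangle ABC$ equals the Euler line $OH$. Both of these pass through $H$, so only direction needs checking. I expect to dispatch this by a short complex-coordinate computation: placing $\triangle ABC$ on the unit circle with $|a|=|b|=|c|=1$ makes $\sigma$ take the explicit form $\sigma(z) = N - \tfrac{1}{2}(ab+bc+ca)\bar z$, whence $O' = (ab+bc+ca)/(a+b+c)$, and the classical formula giving the direction of the Simson line of $e^{i\phi}$ on the unit circumcircle as $e^{i(\alpha+\beta+\gamma-\phi)/2}$ (with $a = e^{i\alpha}$, etc.) evaluates, at $\phi = \arg O'$, to the direction of $OH$. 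Pulling back through $\sigma$ concludes the lemma.
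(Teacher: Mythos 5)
Your proposal is correct, but it takes a genuinely different route from the paper's. Both arguments open the same way: $A^*,B^*,C^*$ lie on the circle with diameter $OH$ centered at $N$, and $A^*B^*C^*\sim ABC$ (the paper obtains the similarity by essentially your parallel-sides angle chase), and both correctly read the statement's ``Steiner's line of $P$'' as the Steiner line of $O$. From there the paper stays synthetic: writing $D$ for the second intersection of $A^*H^*$ with $\odot(A^*B^*C^*)$ and $N'$ for the reflection of $N$ over $B^*C^*$, it reflects everything in $B^*C^*$ to reduce the lemma to the collinearity of $O,D,N'$, then proves $O,D,A$ collinear by a short angle chase and $O,N',A$ collinear via the auxiliary points $O'$ (reflection of $O$ over $BC$), $E$, $F$ on $\odot(ABC)$ and the similarity $FBC\cup O'\sim OC^*B^*\cup N'$. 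You instead make the similarity itself the engine: the anti-similarity $\sigma(z)=N-\tfrac{1}{2}(ab+bc+ca)\bar z$ is correct (one checks $\sigma(a)=\tfrac{a^2-bc}{2a}=A^*$, whence $O\mapsto N$, $H\mapsto H^*$, $\odot(ABC)\mapsto\odot(A^*B^*C^*)$), and equivariance of the pedal construction under (anti-)similarities legitimately pulls the claim back to: the Steiner line of $O'=\sigma^{-1}(O)=(ab+bc+ca)/(a+b+c)$ w.r.t.\ $ABC$ is the Euler line. Note $|ab+bc+ca|=|a+b+c|$, so $O'$ does lie on the unit circle; you should flag the equilateral case $a+b+c=0$, where the whole configuration degenerates anyway. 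Your final computation also checks out: the Steiner line of $p$ has direction $(b-a)(1-c\bar p)$, which at $p=O'$ becomes $(b-a)(ab-c^2)/(ab+bc+ca)$, and conjugation shows this is a real multiple of $a+b+c$, the direction of $OH$; since both lines pass through $H$, they coincide, and your quoted direction formula $e^{i(\alpha+\beta+\gamma-\phi)/2}$ gives the same conclusion. As for what each approach buys: the paper's proof is coordinate-free, and its intermediate collinearity $O,D,A$ is reused verbatim in the proof of Lemma 4.1, so the synthetic route earns its keep later in the paper; your route is shorter, mechanically verifiable, and isolates a clean fact the paper never makes explicit, namely that $(ab+bc+ca)/(a+b+c)$ is the unique point of the circumcircle whose Steiner line is the Euler line.
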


\begin{figure}[H]
	\centering
	\includegraphics[scale = 0.7]{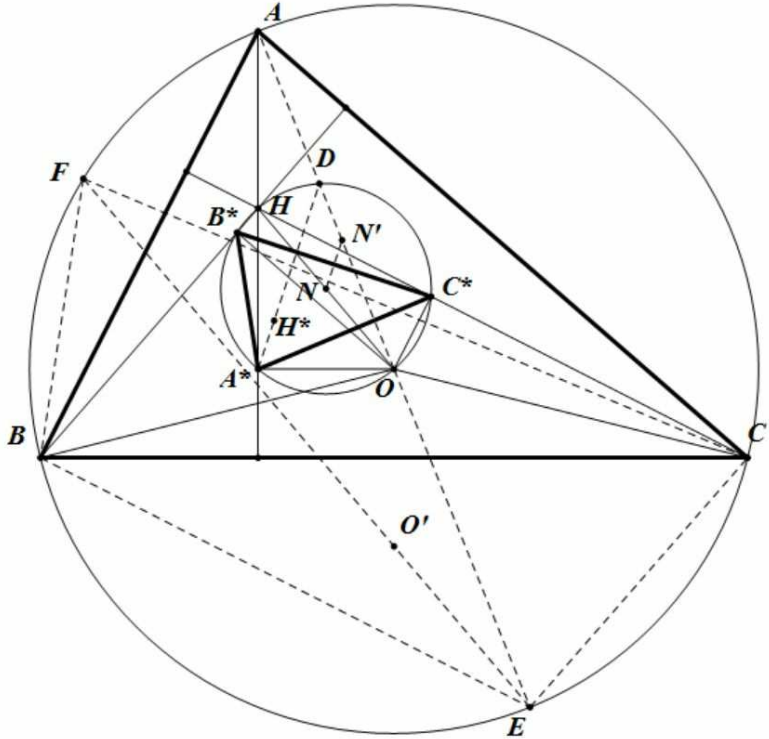}
	\caption{Proof of Lemma 3.2}
	\label{L32Proof}
\end{figure}

\begin{proof}
	Let $H^*$ be the orthocenter of $A^*B^*C^*$. Suppose $A^*H^*\cap \odot (A^*B^*C^*) = D\ne A^*$, then $D$ is the reflection of $H^*$ over $B^*C^*$. Clearly $N$ is the circumcenter of $A^*B^*C^*$, so let $N'$ be the reflection of $N$ over $B^*C^*$. By properties of the Steiner's line, it suffices to show that $O, D, N'$ are collinear.
	
	Angle chasing reveals that $A^*B^*C^*\sim ABC$, thus 
	\[
	\angle (OC^*, OD) = \angle (A^*C^*, A^*D) =\angle (AH, AC) = \angle (AB, AO) = \angle (OC^*, OA),
	\]
	which means $O,D,A$ are collinear. Hence it remains to show that $O,N',A$ are collinear.
	
	Let $O'$ be the reflection of $O$ over $BC$, and define $AO\cap \odot (ABC) = E\ne A, EO'\cap \odot(ABC) = F\ne E$. By properties of the nine-point center, $EO'\parallel HO$. Since $CE\parallel BH, BE\parallel CH$, it follows by angle chasing with cyclic quadrilaterals that $FBC\sim OC^*B^*$. Because $OO'$ corresponds to $NN'$ between the two similar triangles, we have $FBC\cup O'\sim OC^*B^*\cup N'$. Hence,
	\[
	\angle (OC^*, ON') = \angle (FB, FO') = \angle (AB, AO) = \angle (OC^*, OA),
	\]
	which implies that $O,N',A$ are collinear and proves the proposition.
\end{proof}

Now that Proposition 3.1 is proven, the implication is that the Steiner's line of $P$ w.r.t. $\triangle A'B'C'$ passes through the circumcenter of $\triangle A'B'C'$. It turns out that this is the property that makes Theorem 3.1 true, as exemplified by the following generalization we found.

\begin{proposition}
	Given a triangle $ABC$ and a line $l$ on the same plane. Let $O$ be the circumcenter of $ABC$, and let $T$ be the pedal from $O$ to $l$. Suppose $R$ and $S$ are two variable points on $l$ satisfying $TS = TR$. If $S', R'$ are the isogonal conjugates of $S, R$ w.r.t $ABC$ respectively, then $S'R'$ is parallel to a fixed line as $S,R$ vary on $l$.
\end{proposition}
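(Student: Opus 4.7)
The plan is to reformulate Proposition 3.3 as a statement about an involution on the circumconic $\mathscr{C}$ that is the isogonal image of $l$. Recall the classical fact that isogonal conjugation $\sigma$, restricted to a line $l$ not through a vertex of $\triangle ABC$, gives a projective bijection $\sigma\colon l\to\mathscr{C}$, where $\mathscr{C}$ is a circumconic through $A,B,C$; moreover this bijection carries involutions on $l$ to involutions on $\mathscr{C}$. The hypothesis $TS=TR$ says that $\{S,R\}$ is a pair of the involution $\iota$ on $l$ given by reflection through $T$, whose two fixed points are $T$ itself and the point at infinity $\infty_l$. Thus $\{R',S'\}$ is a pair of the induced involution $\iota^\sigma$ on $\mathscr{C}$, and its two fixed points are $T^*:=\sigma(T)$ and $D:=\sigma(\infty_l)$, the latter lying on $\odot(ABC)$ since $\sigma$ sends the line at infinity onto $\odot(ABC)$. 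By the classical theorem that any involution on a conic is \emph{central}---its chords all concur at the pole (with respect to the conic) of the line joining the two fixed points---every chord $R'S'$ passes through a single point $Z$, the pole of $T^*D$ with respect to $\mathscr{C}$. The proposition is therefore equivalent to showing that $Z$ lies on the line at infinity, i.e.\ that $T^*D$ is a diameter of $\mathscr{C}$.

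The key step, which I expect to be the main obstacle, is precisely this diameter condition, and it is the only place where the hypothesis $OT\perp l$ must be used. That hypothesis has a clean projective reformulation: $T$ lies on the polar of $\infty_l$ with respect to $\odot(ABC)$, this polar being exactly the diameter of $\odot(ABC)$ perpendicular to $l$, which is the line $OT$ itself. The task is to transport this incidence under $\sigma$ into a polarity statement for $\mathscr{C}$ that forces $T^*D$ through the center of $\mathscr{C}$. The natural tool is a general projective correspondence between polarity with respect to $\odot(ABC)$ and polarity with respect to a circumconic, mediated by isogonal conjugation---the ``powerful projective result'' flagged in the abstract---from which the diameter conclusion follows. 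In fact, one expects the sharper statement that the center of $\mathscr{C}$ is exactly the midpoint of $T^*$ and $D$ (equivalently, central symmetry of $\mathscr{C}$ interchanges $T^*$ and $D$).

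As a computational fallback, one can verify this identity directly in barycentric coordinates. Writing $l$ as a linear form $pu+qv+rw=0$, the circumconic $\mathscr{C}$ has equation $pa^2 vw+qb^2 wu+rc^2 uv=0$, and the points $T^*$, $D$, and the center of $\mathscr{C}$ all admit closed-form expressions in $p,q,r$ and $a,b,c$. The hypothesis $OT\perp l$ supplies the single linear relation among $p,q,r$ needed to collapse the midpoint identity to a routine algebraic check. This mechanical route offers a rigorous fallback should the projective transport prove delicate to set up precisely; the synthetic route is cleaner and is what explains why the statement is true.
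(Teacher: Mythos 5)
Your reduction is sound and is in substance identical to the paper's: the condition $TS=TR$ makes $(T,\infty_l;S,R)=-1$, isogonal conjugation transports this harmonicity to the circumconic $\mathcal{C}$ (the paper does this via the pencil at a vertex, $A(T',L';S',R')=A(T,l_\infty;S,R)=-1$, which is your involution statement in harmonic form), so every chord $S'R'$ passes through the pole of the chord joining $T^*=\sigma(T)$ and $D=\sigma(\infty_l)$, and the proposition reduces to showing that $T^*D$ is a diameter of $\mathcal{C}$. Your identification of where the hypothesis $OT\perp l$ enters---namely that it says exactly that $T$ and $\infty_l$ are conjugate points with respect to $\odot(ABC)$---also matches the paper precisely.

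But at the crux you stop. The diameter condition is exactly the paper's Corollary 3.2.1 (the special case $l_2 =$ line at infinity of its Proposition 3.3: if $X,Y$ are conjugate w.r.t. $\odot(ABC)$, then the line joining their isogonal conjugates passes through the center of the circumconic through them), and proving it is the only nontrivial content of the whole argument---everything before it is classical bookkeeping about involutions and poles. The paper's proof is a genuine piece of work: from the external point of the conjugate pair it draws the two tangents to the conic, notes that isogonal conjugation carries them to conics tangent to $l_2$ at the images of the tangency points, and runs a cross-ratio chase across three conics to produce two points of $\mathcal{C}_1$ on the relevant lines, forcing $X'Y'$ through the pole of $l_2$. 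Your proposal merely asserts that ``the natural tool'' exists and that ``the diameter conclusion follows''; no such statement is formulated, let alone proved. The barycentric fallback would in principle close the gap ($\mathcal{C}$ and its center have standard closed forms, and $T$ is the intersection of $l$ with the polar of $\infty_l$ w.r.t. the circumcircle, so the collinearity is a finite symbolic check), but you have not carried it out, and ``one can verify'' is not a verification. A small additional remark: your proposed ``sharper statement'' that the center of $\mathcal{C}$ is the midpoint of $T^*D$ is not actually sharper---once $T^*$ and $D$ both lie on $\mathcal{C}$ and their chord passes through the center, the central symmetry of the conic gives the midpoint property automatically.
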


This result would finish the proof for Theorem 3.1 when applying it to triangle $A'B'C'$ with $Q$ as $T$ and the line through $Q$ perpendicular to the Euler line (by Lemma 3.2) of $A'B'C'$ as $l$. We will devote the next subsection to proving Proposition 3.2 and give a formal proof of the theorem in the last subsection.

\subsection{Consequences of a Projective Generalization}

Under an isogonal conjugation with respect to a fixed triangle, a line is transformed into a circumconic of the triangle and vise versa \cite{isogonal1}. Thus we were encouraged to work on the projective plane containing conics. 

\begin{definition}
	With respect to a conic $\mathcal{C}$, two points $A,B$ are said to be conjugates if the polar of $A$ passes through $B$ or, equivalently, the polar of $B$ passing through $A$.
\end{definition}

We now present a powerful result relating isogonal conjugation and polarity.

\begin{proposition}
	Given a triangle $ABC$ and two lines $l_1, l_2$. Let $\psi$ denote the isogonal conjugation w.r.t $\triangle ABC$. Suppose $\psi(l_1) = \mathcal{C}_1, \psi(l_2) = \mathcal{C}_2$. If $X,Y$ are two points on $l_1$ and $\psi(X) = X', \psi(Y) = Y'$, then $X,Y$ are conjugates w.r.t $\mathcal{C}_2$ $\implies $ $X'Y'$ passes through the pole of $l_2$ w.r.t $\mathcal{C}_1$
\end{proposition}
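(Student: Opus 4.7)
The plan is to promote the polar involution on $l_1$ with respect to $\mathcal{C}_2$, via isogonal conjugation, to an involution on the conic $\mathcal{C}_1$, and then identify the point through which the resulting chords of $\mathcal{C}_1$ all pass.

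First I would verify that $\psi$ restricts to a projective isomorphism $\psi|_{l_1}\colon l_1 \to \mathcal{C}_1$. Picking a vertex $A$ with $A \notin l_1$, write this restriction as the composition
\[
l_1 \;\longrightarrow\; \text{pencil at } A \;\longrightarrow\; \text{pencil at } A \;\longrightarrow\; \mathcal{C}_1,
\]
where the first map is ``join with $A$'', the middle one is reflection in the $A$-angle bisector (a projective involution on the pencil), and the last is ``second intersection with $\mathcal{C}_1$''. Each factor is a projectivity, so $\psi|_{l_1}$ is a projective isomorphism.

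Now let $\iota$ be the polar involution on $l_1$ with respect to $\mathcal{C}_2$, pairing each $X \in l_1$ with the intersection of its $\mathcal{C}_2$-polar with $l_1$; the hypothesis that $X$ and $Y$ are conjugate w.r.t.\ $\mathcal{C}_2$ is exactly $Y = \iota(X)$. Transporting $\iota$ through $\psi|_{l_1}$ yields a projective involution $\tilde\iota$ on $\mathcal{C}_1$ with $\tilde\iota(X') = Y'$. By the classical fact that every nontrivial projective involution of a smooth conic is cut out by the pencil of chords through a unique point---namely, the intersection of the tangents at the involution's two fixed points---the lines $X'Y'$ all concur at some point $P$.

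To identify $P$, I would compare fixed points. The fixed points of $\iota$ on $l_1$ are exactly $l_1 \cap \mathcal{C}_2$, and since $\psi$ is an involution of the plane swapping $l_1 \leftrightarrow \mathcal{C}_1$ and $\mathcal{C}_2 \leftrightarrow l_2$, these two points transport under $\psi$ to $\mathcal{C}_1 \cap l_2$. But these are precisely the tangency points of the two tangents from the pole $P_0$ of $l_2$ with respect to $\mathcal{C}_1$, so the chord involution on $\mathcal{C}_1$ centered at $P_0$ has the same fixed points as $\tilde\iota$. Since a nontrivial projective involution on $\mathbb{P}^1$ is determined by its two fixed points, $P$ must equal $P_0$.

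The main technical hurdle is the first step: globally $\psi$ is a Cremona (quadratic) map rather than a projective one, so showing that its restriction to a line is projective onto the image conic really does require the pencil parametrization above. Once this is in hand, the involution transport and the fixed-point identification are largely formal, provided one works in generic position (for instance, $l_1$ not tangent to $\mathcal{C}_2$, so that $\iota$ is well-defined); the degenerate configurations follow by continuity.
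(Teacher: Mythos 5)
Your proposal is correct, but it follows a genuinely different route from the paper's. The paper argues one conjugate pair at a time: it takes $X$ external to $\mathcal{C}_2$ (using the fact that at least one point of a conjugate pair is external), draws the tangents $XU$, $XV$, pushes them through $\psi$ to circumconics $\mathcal{H}_U$, $\mathcal{H}_V$ tangent to $l_2$ (tangency-preservation of isogonal conjugation), and runs a cross-ratio chase to manufacture two auxiliary points $V^* = X'U'\cap Y'V'$ and $U^* = X'V'\cap Y'U'$ on $\mathcal{C}_1$; the conclusion then falls out of the self-polar diagonal triangle of the inscribed quadrangle $X'Y'U^*V^*$, whose diagonal point $X'Y'\cap U^*V^*$ is the pole of $U'V' = l_2$ w.r.t.\ $\mathcal{C}_1$. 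You instead treat conjugacy w.r.t.\ $\mathcal{C}_2$ as an involution on $l_1$, prove that $\psi|_{l_1}$ is a projectivity onto $\mathcal{C}_1$ (your pencil decomposition at a vertex is sound: join with $A$, reflect the pencil in the $A$-bisector, then Steiner's parametrization of $\mathcal{C}_1$ by the pencil at $A$), transport the involution to $\mathcal{C}_1$, and identify its Fr\'egier point by matching fixed points. Your approach is more structural and proves the concurrency of \emph{all} chords $X'Y'$ at once, making the underlying mechanism (a conic involution centered at the pole) explicit; the paper's chase, by contrast, stays entirely within real projective geometry. The one caveat in your version is the identification step: the fixed points $l_1\cap\mathcal{C}_2$, hence $\mathcal{C}_1\cap l_2$, may be complex when $l_1$ misses $\mathcal{C}_2$, so you must either complexify or determine the Fr\'egier point from two real conjugate pairs instead of from fixed points \textemdash\ a point your closing paragraph only partially flags alongside the genuine degeneracies ($l_1$ tangent to $\mathcal{C}_2$, or $X$, $Y$ at a vertex or on a sideline, where the quadratic map $\psi$ is undefined). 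It is also worth noting that your key lemma is implicitly the engine behind the paper's cross-ratio equalities, which are justified by exactly the same conic-pencil projectivities through the vertices.
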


\begin{proof}
	
	We first recall two properties of isogonal conjugation and conjugates. One, at least one of a conjugate pair is an external point, meaning two tangents can be drawn from the point to the respective conic. Two, isogonal conjugation preserves tangency, i.e. if $l$ is tangent to $\mathcal{C}$, then $\psi(\mathcal{C})$ is tangent to $\psi(l)$. 
	
	In light of the first property, WLOG assume $X$ is an external point w.r.t. $\mathcal{C}_2$; thus there exists two points $V,U\in \mathcal{C}_2$ such that $XV, XU$ are tangents. Take the following isogonal conjugations:
	\[
	\psi(V) = V', \psi(U) = U', \psi(VU) = \mathcal{C}_X, \psi(XV) = \mathcal{H}_V,  \psi(XU) = \mathcal{H}_U.
	\]
	Since $Y\in VU$, it follows that $Y' \in \mathcal{C}_X$. By the second property, $ \mathcal{H}_V,  \mathcal{H}_U$ are tangent to $l_2$ at $U', V'$ respectively. Consider the following cross-ratio equality(or projectivity if you prefer):
	\begin{align*}
	Y'(A,B;C,V') &=  U'(A,B;C,V') \tag{taken on $\mathcal{C}_X$} \\
	&= U'(A,B;C,U') \tag{taken on $\mathcal{H}_U$} \\
	&= X'(A,B;C,U') \tag{taken on $\mathcal{H}_U$} 
	\end{align*}
	This implies $X'U'\cap Y'V' = V^*\in \mathcal{C}_1$. Symmetrically, we can obtain $X'V'\cap Y'U' = U^*\in \mathcal{C}_1$. Hence $X'Y'$ intersect $V^*U^*$ at the pole of $l_2$ w.r.t $\mathcal{C}_1$.
\end{proof}

\begin{figure}[t]
	\centering
	\includegraphics[scale = 0.7]{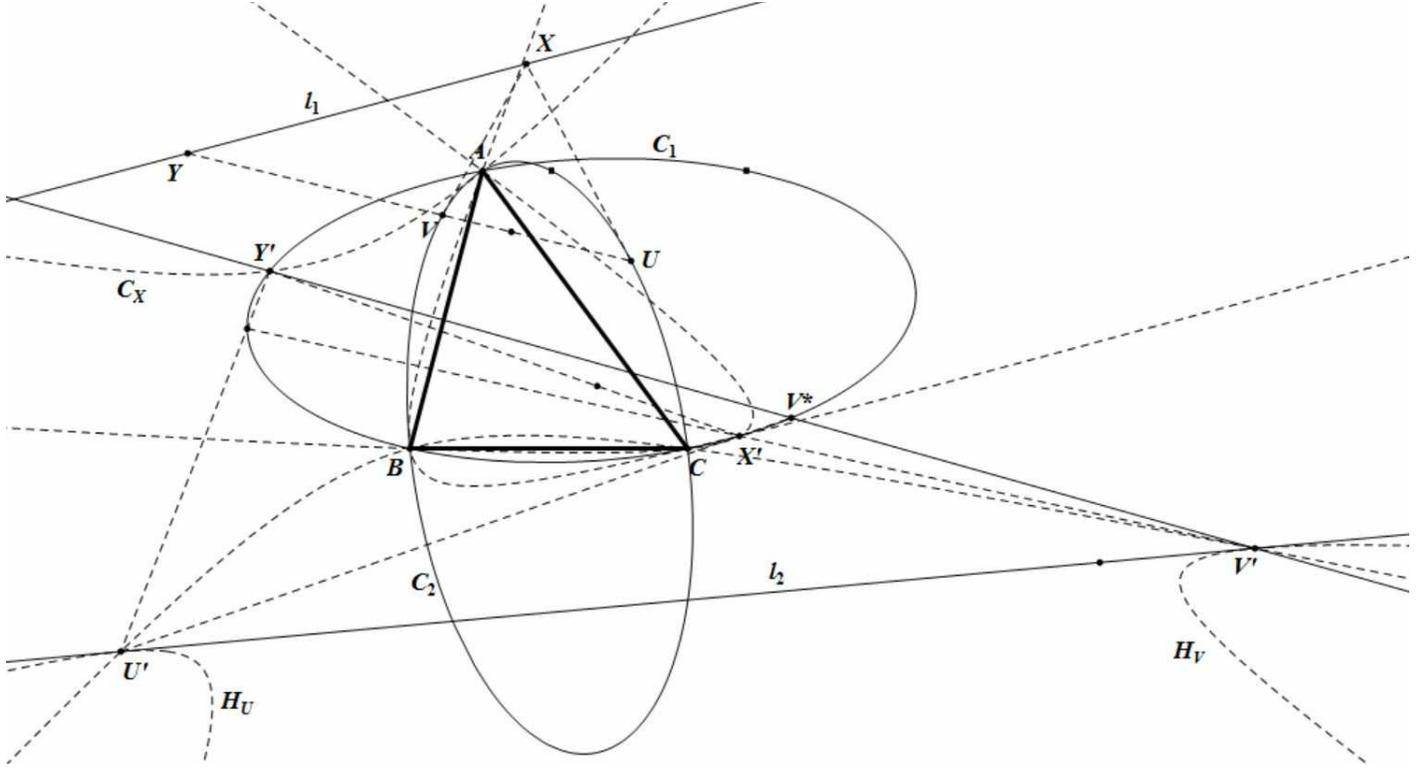}
	\caption{Proof of Proposition 3.3}
	\label{P33Proof}
\end{figure}

\begin{corollary}
	Let $X,Y$ be conjugates conjugates w.r.t $\odot (ABC)$. If $X', Y'$ are isogonal conjugates of $X,Y$ w.r.t $\triangle ABC$, then $X'Y'$ passes through the center of the circumconic defined by $A,B,C,X',Y'$.
\end{corollary}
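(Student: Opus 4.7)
The plan is to derive this corollary as a direct specialization of Proposition 3.3. The key observation I would lean on is the classical fact that, under isogonal conjugation $\psi$ with respect to $\triangle ABC$, the image of the circumcircle $\odot(ABC)$ is precisely the line at infinity $\ell_\infty$. This immediately suggests applying Proposition 3.3 with $\mathcal{C}_2$ playing the role of $\odot(ABC)$ and $l_2$ playing the role of $\ell_\infty$.

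Concretely, I would set $l_1 = XY$ and $l_2 = \ell_\infty$ in the statement of Proposition 3.3. Then $\psi(l_2) = \mathcal{C}_2 = \odot(ABC)$, while $\psi(l_1) = \mathcal{C}_1$ is a circumconic of $\triangle ABC$; since $X, Y \in l_1$ forces $X', Y' \in \mathcal{C}_1$, this $\mathcal{C}_1$ is exactly the circumconic through $A, B, C, X', Y'$ named in the statement. By hypothesis $X$ and $Y$ are conjugates with respect to $\mathcal{C}_2 = \odot(ABC)$, so Proposition 3.3 applies verbatim and yields that $X'Y'$ passes through the pole of $l_2 = \ell_\infty$ with respect to $\mathcal{C}_1$.

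To finish, I would invoke the standard projective fact that the pole of $\ell_\infty$ with respect to a proper (central) conic is its center. Consequently $X'Y'$ passes through the center of $\mathcal{C}_1$, which is the desired conclusion. There is no substantive obstacle at this stage: the corollary is essentially a one-line consequence of Proposition 3.3 once the dictionary $\{\odot(ABC) \leftrightarrow \ell_\infty,\ \text{center} \leftrightarrow \text{pole of } \ell_\infty\}$ is applied. The only things to justify separately are the two classical black-box facts being used, namely that $\psi$ sends $\odot(ABC)$ to $\ell_\infty$ (equivalently, each point on $\odot(ABC)$ has its isogonal conjugate at infinity, with direction perpendicular to the corresponding Simson line) and that the pole of $\ell_\infty$ with respect to a conic is its center; both are entirely standard.
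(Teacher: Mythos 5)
Your proposal is correct and matches the paper's own proof essentially verbatim: the paper also obtains the corollary by applying Proposition 3.3 with $l_1 = XY$ and $l_2$ the line at infinity, using that $\psi(l_2) = \odot(ABC)$ and that the pole of the line at infinity with respect to $\mathcal{C}_1$ is its center. Your write-up merely spells out these standard facts in slightly more detail.
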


\begin{proof}
	This is simply a consequence of Proposition 3.3 with $l_2$ as the line at infinity and $l_1$ as $XY$, since $\psi(l_2) = \odot(ABC)$ and the pole of $l_2$ w.r.t $\mathcal{C}_1$ is the center of $\mathcal{C}_1$.
\end{proof}

We can now prove Proposition 3.2.

\begin{figure}[H]
	\centering
	\includegraphics[scale = 0.7]{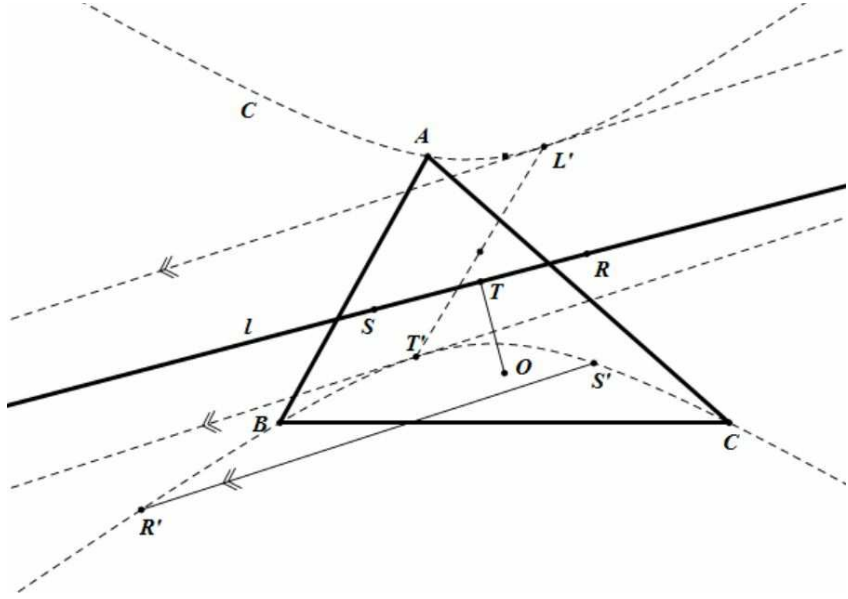}
	\caption{Proposition 3.2}
	\label{P32Proof}
\end{figure}

\begin{proof}[Proof of Proposition 3.2.]
	Let $l_\infty$ denote the point at infinity on $l$. Suppose $l$ is transformed to a circumconic $\mathcal{C}$ under isogonal conjugation w.r.t $ABC$. If $T',L'$ are the isogonal conjugates of $T, l_\infty$, then $T',L',X', Y'$ all lie on $\mathcal{C}$. Since $l_\infty$ and $T$ are conjugates w.r.t $\odot (ABC)$, Corollary 3.2.1 asserts that $T'L'$ passes through the center of $\mathcal{C}$, or equivalently, the pole of $T'L'$, denoted by $K$, lies on the line of infinity. 
	
	Observe that 
	\[
	A(T', L'; S', R') = A(T, l_\infty; S, R) = -1.
	\]
	This implies $S'R'$ passes through $K$. Hence $S'R'$ is parallel to the tangent at $l_\infty$ at $\mathcal{C}$, which is a fixed line. 
\end{proof}

\subsection{Proof}

We now give a formal proof of Theorem 3.1. We will heavily refer to results already established in Section 2.

\begin{proof}[Proof of Theorem 3.1]
	Define $A',B',C'$ to be the pedals from $P$ onto $AI,BI,CI$ respectively. Lemma 2.2 states that the following points exist:
	\[
	O_{A_1B_1C_1}(A'B'C') = Q_1, O_{A_2B_2C_2}(A'B'C') = Q_2, O_{A_PB_PC_P}(A'B'C') = Q.
	\]
	Furthermore, we proved in Lemma 2.3 that $Q_1,Q,Q_2$ all lie on a line, denoted by $q$, that is perpendicular to the Steiner's line of $P$ w.r.t $A'B'C'$ and the points satisfy $QQ_1 = QQ_2$. By Proposition 3.1, we know the circumcenter $O'$ of $A'B'C'$ lies on the Steiner's line. Thus, $Q$ can be viewed as the pedal from the $O'$ to $q$. From Lemma 2.2, we know the circumcenters $O_1, O_2$ of $\triangle A_1B_1C_1, \triangle A_2B_2C_2$ are isogonal conjugates of $Q_1, Q_2$ w.r.t $\triangle A'B'C'$. Applying Proposition 3.2 on triangle $A'B'C'$ with $q$ as $l$ and $Q$ as $T$, we obtain that $O_1O_2$ is parallel to a fixed line as $Q_1,Q_2$ varies $l$. Since the radical axis of $\odot (A_1B_1C_1)$ and $\odot (A_2B_2C_2)$ is perpendicular to $O_1O_2$ and passes through a fixed point, it follows that the radical axis is fixed.
\end{proof}

\begin{proposition}
Keeping the same notations as above, if $q$ is transformed to $\mathcal{C}$ under isogonal conjugation w.r.t $\triangle A'B'C'$, then we claim that $P, O_P\in \mathcal{C}$, where $O_P$ is the circumcenter of $A_PB_PC_P$. 
\end{proposition}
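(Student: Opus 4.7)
The plan is to reduce the claim to two statements about the line $q$ under the isogonal involution $\psi$ with respect to $\triangle A'B'C'$. Since $\mathcal{C} = \psi(q)$ and $\psi$ is an involution (in the projective sense), we have $P \in \mathcal{C}$ if and only if $\psi(P) \in q$, and similarly $O_P \in \mathcal{C}$ if and only if $\psi(O_P) \in q$. Both facts will follow from observations already established in the paper, so no new geometric ideas are required beyond identifying the two isogonal images correctly.

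For the circumcenter $O_P$, the cleanest route is to specialize Lemma 2.2 to the degenerate choice $x = 0$, in which case the six points $A_1,B_1,C_1,A_2,B_2,C_2$ all collapse to $A_P,B_P,C_P$. Then the circumcenter $O_1$ appearing in Lemma 2.2 is exactly $O_P$, the orthologic center $Q_1 = O_{A_1B_1C_1}(A'B'C')$ becomes $Q = O_{A_PB_PC_P}(A'B'C')$, and Lemma 2.2 asserts that these two points are isogonal conjugates with respect to $\triangle A'B'C'$. Hence $\psi(O_P) = Q$, which lies on $q$ by definition, and so $O_P \in \mathcal{C}$.

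For $P$ itself, the key observation is that $P$ lies on the circumcircle of $\triangle A'B'C'$: since $A',B',C'$ are the feet of perpendiculars from $P$ onto $AI,BI,CI$, we have $\angle PA'I = \angle PB'I = \angle PC'I = 90^\circ$, so $A',B',C',P,I$ all lie on the circle with diameter $PI$ (this concyclicity is already invoked in the proof of Lemma 2.2). The isogonal conjugate of a point on the circumcircle is a point at infinity; more precisely, by Definition 2.2 the three isogonals of $P$ in the angles of $\triangle A'B'C'$ are parallel and perpendicular to the Steiner line of $P$ with respect to $\triangle A'B'C'$. Thus $\psi(P)$ is the point at infinity in the direction perpendicular to that Steiner line. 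But $q$ was defined as the line through $Q$ perpendicular to the Steiner line, so projectively $q$ contains exactly this point at infinity, giving $\psi(P) \in q$ and therefore $P \in \mathcal{C}$.

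The only genuinely delicate step is the projective interpretation in the second half: one must be comfortable regarding $\psi(P)$ as a point at infinity and checking that $q$, as a projective line, passes through it — equivalently, that the direction of $q$ coincides with the common direction of the isogonals of $P$. Once this identification is made, both inclusions reduce to results already proved (the concyclicity of $A',B',C',P,I$ on the circle with diameter $PI$, the property of Steiner lines recorded in Definition 2.2, and the isogonal-conjugate conclusion of Lemma 2.2 applied at $x=0$), and no further work is needed.
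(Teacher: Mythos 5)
Your proposal is correct and follows essentially the same route as the paper: the paper likewise obtains $O_P \in \mathcal{C}$ from $O_P$ being the isogonal conjugate of $Q \in q$ (which is exactly Lemma 2.2 at $x = 0$), and obtains $P \in \mathcal{C}$ from $P \in \odot(A'B'C')$ having as isogonal conjugate the point at infinity perpendicular to the Steiner line of $P$, a direction that $q$ shares. Your write-up merely makes explicit two justifications the paper leaves implicit (the concyclicity via the circle with diameter $PI$, and the $x=0$ specialization identifying $\psi(O_P) = Q$), so there is nothing to correct.
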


\begin{proof}
 On one hand, $O_P$ is the isogonal conjugate of $Q$, which lies on $q$; so $O_P\in C$. On the other hand, since $P\in \odot(A'B'C')$, the isogonal conjugate of $P$ is the point at infinity corresponding to the direction perpendicular to the Steiner line of $P$ w.r.t $A'B'C'$. Because $q$ is parallel to this direction, it contains the isogonal conjugate of $P$, which proves that $P\in \mathcal{C}$.	
\end{proof}

The following corollary follows directly from the proof of Proposition 3.2.

\begin{corollary}
Keeping the same notations as above, the fixed radical axis is perpendicular to the tangent to $O_P$ or $P$ on $\mathcal{C}$.
\end{corollary}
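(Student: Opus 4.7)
The plan is simply to combine Theorem 3.1 with the computation already carried out inside the proof of Proposition 3.2, using Proposition 3.4 to recognize the points $O_P$ and $P$ on $\mathcal{C}$. Because the radical axis of two circles is perpendicular to the line through their centers, the corollary reduces to the claim that $O_1O_2$ is parallel to the tangent to $\mathcal{C}$ at $O_P$ (or equivalently at $P$, since these two points will turn out to be antipodal on the conic).

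First I would redo the last few lines of the proof of Proposition 3.2 with the specializations demanded by Theorem 3.1: take the triangle to be $A'B'C'$, the line $l$ to be $q$, the pedal $T$ to be $Q$, and the two moving points $R,S$ to be $Q_1,Q_2$. Under isogonal conjugation with respect to $A'B'C'$, the line $q$ becomes $\mathcal{C}$, and the argument there produces a point $K$ at infinity, namely the pole of the chord $T'L'$ with respect to $\mathcal{C}$, such that the image of $S'R'$, i.e.\ $O_1O_2$, passes through $K$.

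Next I would identify $T'$ and $L'$ explicitly using Proposition 3.4: $T'$ is the isogonal conjugate of $Q$ w.r.t.\ $A'B'C'$, which is exactly $O_P$, and $L'$ is the isogonal conjugate of the point at infinity on $q$, which the second half of Proposition 3.4's proof identifies as $P$. Hence $T'L' = O_P P$, and since by Corollary 3.2.1 this chord of $\mathcal{C}$ passes through the center of $\mathcal{C}$, the points $O_P$ and $P$ are diametrically opposite on $\mathcal{C}$ and $K$ is the pole of their common diameter.

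Finally, I would invoke the classical fact that the tangent to a central conic at an endpoint of a diameter is parallel to the conjugate diameter; equivalently, the pole of a diameter is the common direction of the two tangents at its endpoints. Therefore $O_1O_2 \parallel K$ is parallel to the tangent to $\mathcal{C}$ at both $O_P$ and $P$, and the radical axis, being perpendicular to $O_1O_2$, is perpendicular to these tangents. There is essentially no obstacle to overcome here: once Proposition 3.4 supplies the identifications $T' = O_P$ and $L' = P$, the corollary is just a rereading of the very last sentence of the proof of Proposition 3.2 through the projective dictionary of that proposition.
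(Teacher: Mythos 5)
Your proposal is correct and is precisely the argument the paper intends: the paper offers no separate proof, saying only that the corollary ``follows directly from the proof of Proposition 3.2,'' and you unwind that proof under the specializations of Theorem 3.1, using Proposition 3.4 to identify $T'=O_P$ and $L'=P$, Corollary 3.2.1 to see that $O_PP$ is a diameter of $\mathcal{C}$, and the standard fact that the pole of a diameter is the point at infinity of the (parallel) tangents at its endpoints, so that $O_1O_2$ is parallel to those tangents and the radical axis is perpendicular to them. If anything, you make explicit the antipodality of $O_P$ and $P$ on $\mathcal{C}$ and the conjugate-diameter fact, both of which the paper leaves implicit.
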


\section{Proof of the Open Problem}

The open problem focuses on when $P = B_e$, the Bevan point of $ABC$. It suffices to prove that the fixed line described in Theorem 3.1 is the Nagel line. We first prove that the fixed point characterized in section 2 lies on the Nagel line. In fact, the next proposition proves that it coincides with the Nagel point. Then, it remains to show that the Nagel line is perpendicular to the fixed line characterized in section 3.

\begin{proposition}
	Let $B_e$ be the Bevan point of $ABC$. Define $A',B',C'$ to be the pedals from $B_e$ onto $AI,BI,CI$ respectively. Then, the orthocenter of $A'B'C'$ coincides with the Nagel point of $ABC$.
\end{proposition}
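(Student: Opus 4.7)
The plan is to identify the orthocenter $H'$ of $A'B'C'$ with the Nagel point $N = 3G - 2I$ of $\triangle ABC$ by applying the standard vector identity
\[
H' = A' + B' + C' - 2O_1,
\]
where $O_1$ is the circumcenter of $\triangle A'B'C'$. First I would identify $O_1$: by construction, $\angle IA'B_e = \angle IB'B_e = \angle IC'B_e = 90^\circ$, so the four points $A', B', C', I$ all lie on the circle with diameter $IB_e$. Because the Bevan point satisfies $B_e = 2O - I$ (i.e.\ $B_e$ is the reflection of $I$ across $O$), this circle has center $O$, and hence $O_1 = O$.

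The main geometric step is the computation of $A' + B' + C'$, for which I would pass to the excentral triangle $I_AI_BI_C$. In this triangle $I$ is the orthocenter and $B_e$ is the circumcenter. The side $I_BI_C$ is the external bisector of $\angle A$ (so it passes through $A$), while the line $II_A = AI$ is perpendicular to $I_BI_C$ at $A$; in particular, the foot of the altitude of $\triangle I_AI_BI_C$ from $I_A$ is $A$ itself. Letting $M_A$ be the midpoint of $I_BI_C$, the segments $B_eM_A$ and $AA'$ are both perpendicular to $I_BI_C$ (the former as part of the perpendicular bisector, the latter as a subset of $AI$), while $B_eA'$ and $M_AA$ are both perpendicular to $II_A$; so $B_eM_AAA'$ is a rectangle, giving $A' = B_e + A - M_A$, and similarly for $B'$ and $C'$. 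Summing, and using $M_A + M_B + M_C = I_A + I_B + I_C$ together with the Euler-line identity $I_A + I_B + I_C = 2B_e + I$ for the excentral triangle (whose centroid is $\tfrac13(2B_e + I)$), yields
\[
A' + B' + C' \;=\; 3B_e + 3G - (2B_e + I) \;=\; B_e + 3G - I.
\]

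Plugging this back into the formula for $H'$ and invoking $B_e = 2O - I$ one more time gives $H' = B_e + 3G - I - 2O = 3G - 2I = N$, as desired. The step I expect to require the most care is the rectangle identification $B_eM_AAA'$; the essential synthetic bridge between $\triangle ABC$ and its excentral triangle, namely that the foot of the altitude from $I_A$ in $\triangle I_AI_BI_C$ is exactly $A$, is elementary but is what allows the computation to collapse so cleanly. Once that rectangle is in place, the rest is pure bookkeeping with two standard Euler-line identities.
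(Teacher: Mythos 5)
Your proposal is correct, but it takes a genuinely different route from the paper's. The paper argues synthetically: it transfers the statement to the excentral triangle (Lemma 4.1), proves via similar triangles and parallel lines that the orthocenter $H^*$ of the pedal triangle of $O$ lies on the line $A_HJ$, and then identifies the Nagel point through its cevian characterization as the concurrency point of $AA_0, BB_0, CC_0$, where $A_0B_0C_0$ is the pedal (extouch) triangle of $B_e$; this reuses the configuration built for Lemma 3.2 (the point $D$ with $A^*D\perp B^*C^*$). You instead compute affinely: the concyclicity of $A',B',C',I$ on the circle with diameter $IB_e$ centered at $O$ (a fact the paper also invokes, though only later, in the proof of Theorem 1.1), the rectangle $B_eM_AAA'$ yielding $A' = B_e + A - M_A$, and the two Euler-line identities $B_e = 2O - I$ and $I_A + I_B + I_C = 2B_e + I$, which collapse $H' = A' + B' + C' - 2O$ to $3G - 2I = N$. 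Every step checks out: the bridge fact that $ABC$ is the orthic triangle of $I_AI_BI_C$ (so the altitude foot from $I_A$ is $A$) is correct, the identity $N = 3G - 2I$ is the standard Nagel-line relation $IG:GN = 1:2$, and even in the degenerate isosceles case $B_e\in AI$ (where the rectangle collapses) the identity $A' - A = B_e - M_A$ survives, since $B_e - M_A$ is parallel to $AI$ and $A, A'$ are the respective projections of $M_A, B_e$ onto $AI$. What your computation buys is brevity and an explicit closed form for $H'$, bypassing the concurrency argument entirely; what the paper's route buys is a proof in the synthetic style maintained throughout, plus the standalone collinearity Lemma 4.1 about the excentral configuration.
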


Similar to Proposition 3.1, Proposition 4.1 will follow from a lemma written in the perspective of the excenter triangle of $ABC$.

\begin{lemma}
	Given a triangle $ABC$ with circumcenter $O$, orthocenter $H$, and orthic triangle $A_HB_HC_H$. Define $A^*,B^*,C^*$ to be the pedals from $O$ onto $AH,BH,CH$ respectively. If $AO$ intersects $B_HC_H$ at $J$, then the orthocenter $H^*$ of $A^*B^*C^*$ lies on $A_HJ$.
\end{lemma}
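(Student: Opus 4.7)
The plan is to identify $H^*$ with the Nagel point of the orthic triangle $A_HB_HC_H$ and $J$ with the touch point of the $A_H$-excircle of that triangle on side $B_HC_H$. Once both identifications are in hand, $A_HJ$ is by definition the $A_H$-Nagel cevian and so automatically passes through the Nagel point $H^*$.

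First I would establish the parallelogram relation $A_H - A^* = M_A - O$, where $M_A$ is the midpoint of $BC$. Indeed, $OA^*\perp AH$ by the definition of $A^*$, $OM_A\perp BC$ since $OM_A$ is the perpendicular bisector of $BC$, and $AH\perp BC$; these force both pairs of opposite sides of $OA^*A_HM_A$ to be parallel, making it a rectangle, which yields the vector identity. Summing the three analogous identities and dividing by three gives $G_{\mathrm{orthic}} - G^* = G - O$, where $G$, $G_{\mathrm{orthic}}$, $G^*$ are the centroids of $ABC$, $A_HB_HC_H$, and $A^*B^*C^*$ respectively. Next, the right angles $\angle OA^*H = \angle OB^*H = \angle OC^*H = 90^\circ$ place $A^*, B^*, C^*$ on the circle with diameter $OH$, whose center is the nine-point center $(O+H)/2$ of $ABC$. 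So this point is the circumcenter of $A^*B^*C^*$, and the standard relation $H^* = 3G^* - 2\cdot(\text{circumcenter of }A^*B^*C^*)$ combined with the centroid identity above and the Euler-line relation $H = 3G - 2O$ rearranges to
\[
H^* = 3G_{\mathrm{orthic}} - 2H.
\]
Since $H$ is the incenter of $A_HB_HC_H$, this matches the Nagel-point formula $\mathrm{Nag} = 3G - 2I$ applied to the orthic triangle, so $H^*$ is exactly its Nagel point.

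For $J$, each vertex of $ABC$ is an excenter of the orthic triangle: $A$ lies on the internal bisector $AA_H$ and on both external bisectors $CA$ and $AB$ (which are sides of $ABC$ themselves), making it the $A_H$-excenter. The corresponding excircle is centered at $A$ and tangent to $B_HC_H$ at the foot of perpendicular from $A$. Because $AO$ is the isogonal of the altitude $AH$ at $A$ and $B_HC_H$ is antiparallel to $BC$ with respect to $\angle A$, reflecting $AH\perp BC$ across the bisector of $\angle A$ yields $AO\perp B_HC_H$. Therefore $J = AO\cap B_HC_H$ is this foot of perpendicular, i.e.\ the $A_H$-excircle's touch point on $B_HC_H$. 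The line $A_HJ$ is then a Nagel cevian of $A_HB_HC_H$ and passes through the Nagel point $H^*$.

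The main obstacle will be spotting that the several vector identities combine to give precisely $H^* = 3G_{\mathrm{orthic}} - 2H$; each ingredient in isolation is routine, but recognizing this combination as the Nagel-point formula for the orthic triangle is the key observation that unlocks the argument. The excenter identification of $A$ and the perpendicularity $AO\perp B_HC_H$ are standard and require only a few lines each.
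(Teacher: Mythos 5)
Your proof is correct, but it takes a genuinely different route from the paper. The paper argues synthetically: it sets $D = AO\cap\odot(A^*B^*C^*)$, recalls from Lemma 3.2 that $A^*D\perp B^*C^*$ so that $H^*\in A^*D$, and then shows both $A_HJ\parallel GD$ and $A_HH^*\parallel GD$ (where $G$ is the second intersection of $AH$ with $\odot(ABC)$) via ratio computations and the similarity $ABC\cup H\cup G\sim A^*B^*C^*\cup H^*\cup D$; collinearity of $H^*$, $A_H$, $J$ follows. Only afterwards, by applying the lemma at all three vertices, does the paper deduce that $H^*$ is the Nagel point of the orthic triangle. You invert this logic: your vector computation (the rectangle identity $A_H-A^*=M_A-O$, the circumcenter of $A^*B^*C^*$ being the midpoint of $OH$, and the Euler relations) directly yields $H^*=3G_{\mathrm{orthic}}-2H$, identifying $H^*$ as the Nagel point outright, and your identification of $J$ as the $A_H$-extouch point (via $A$ being the $A_H$-excenter and $AO\perp B_HC_H$) then makes the lemma immediate. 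Your computations check out, and your route is arguably more economical, since it proves Proposition 4.1 in one stroke rather than via three symmetric applications of the lemma; the paper's argument, by contrast, stays purely synthetic, consistent with its stated aims. One caveat: your identifications ($H$ the incenter of $A_HB_HC_H$, and $A$ the $A_H$-excenter) require $ABC$ acute — in the obtuse case the obtuse vertex and $H$ swap roles in the orthocentric system, so your argument would need adjusting, whereas the lemma is stated for general $ABC$. This is harmless for the paper's purposes, since the lemma is applied to the excentral triangle of the original triangle, which is always acute, but you should state the assumption explicitly.
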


\begin{proof}
	Let $AO\cap \odot(A^*B^*C^*) = D\ne O$. Recall from the proof of Lemma 3.2 that $A^*D\perp B^*C^*$; thus $H^*\in A^*D$. Additionally, since $HO$ is the diameter of $ \odot(A^*B^*C^*)$, we have $HD\perp AO$, which implies $D\in \odot(AB_HC_HH)$. If $AH\cap \odot(ABC) = G\ne A$, then because $AD\perp B_HC_H$ and $ABC\sim AB_HC_H$, it follows that 
	\[
	\frac {AJ}{JD} = \frac {AA_H}{GA_H},
	\]
	implying $A_HJ\parallel GD$. 
	
	Note that it is a well known property of circumcenters that the distance of $O$ to $BC$ is half of $AH$, i.e. $2A^*A_H = AH$. Because $2GA_H = HG$, we get
	\[
	\frac {A^*A_H}{GA_H} = \frac {AH}{HG} = \frac {A^*H^*}{DH^*}
	\]  
	where the last equality follows from the similarity $ABC\cup H\cup G\sim A^*B^*C^*\cup H^*\cup D$. Notice that this implies $A_HH^*\parallel GD$. Hence, $A_HJ\parallel A_HH^*\implies H^*\in A_HJ$.
\end{proof}

\begin{figure}[t]
	\centering
	\includegraphics[scale = 0.7]{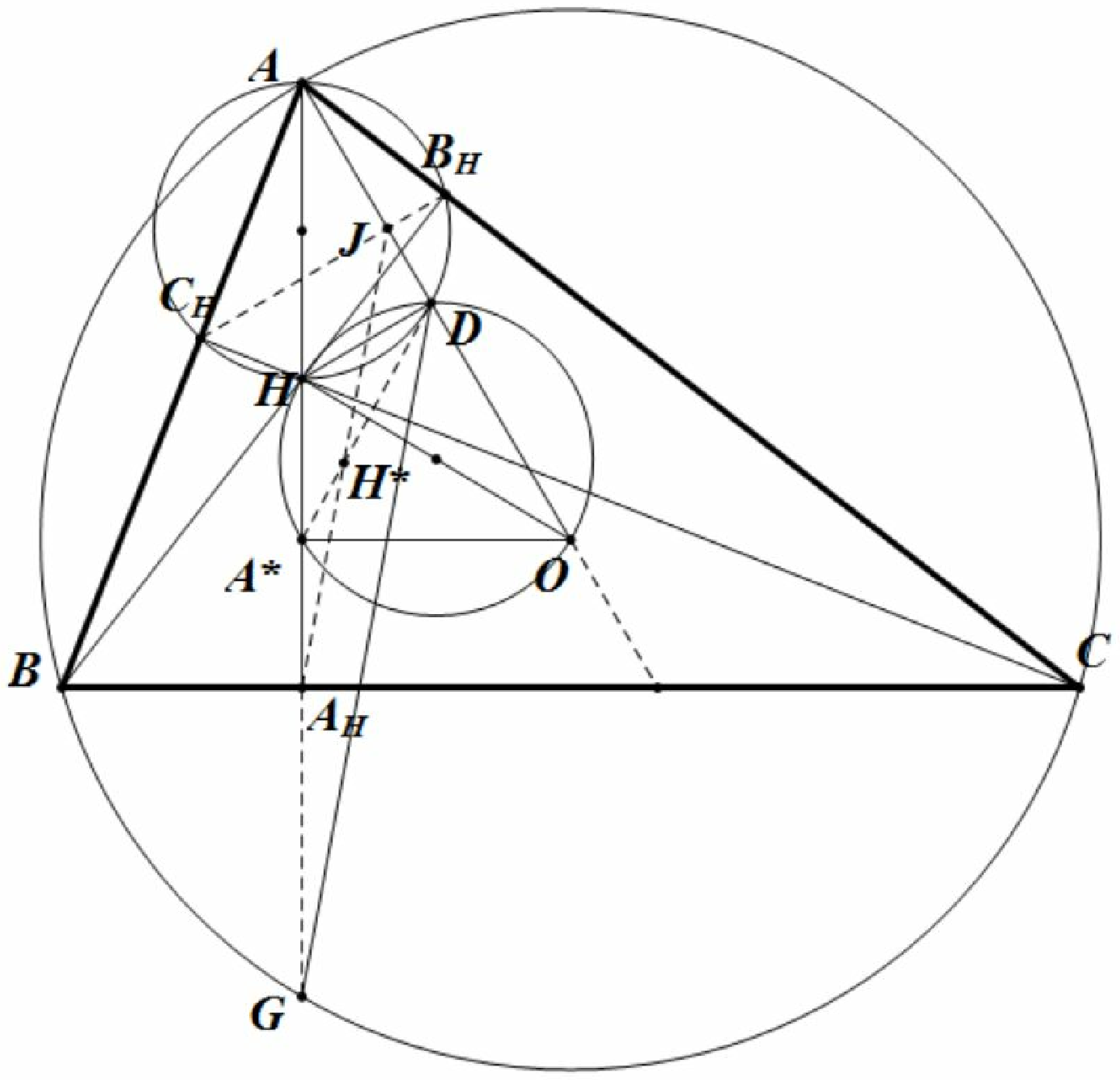}
	\caption{Proof of Lemma 4.1}
	\label{L41Proof}
\end{figure}

In the context of Proposition 4.1, note that if $A_0B_0C_0$ is the pedal triangle of $B_e$, then the Nagel point is where $AA_0, BB_0, CC_0$ concur. In Lemma 4.1, $O$ is the Bevan point of $A_HB_HC_H$, and $J$ is the pedal from $O$ to $B_HC_H$. Thus a symmetric application of the lemma implies that $H^*$ is the Nagel point of $A_HB_HC_H$, and Proposition 4.1 follows by a change in notation.

\begin{proposition}
	Let $O_0$ to be the circumcenter of $A_0B_0C_0$, and let $H'$ be the orthocenter of $A'B'C'$. Then $B_e, H', O_0$ are collinear (See Figure 8.)
\end{proposition}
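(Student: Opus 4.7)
By Proposition 4.1 we have $H' = N$, the Nagel point of $\triangle ABC$, so the problem reduces to showing that $B_e$, $N$, and $O_0$ are collinear. The first step in my plan is to invoke a classical fact: for any point $P$ on the plane of $\triangle ABC$, the circumcenter of its pedal triangle with respect to $\triangle ABC$ equals the midpoint of $P$ and its isogonal conjugate $P^{\ast}$ w.r.t. $\triangle ABC$. Applying this with $P = B_e$ gives $O_0 = \tfrac{1}{2}(B_e + B_e^{\ast})$, so $O_0$ lies on the line $B_e B_e^{\ast}$. The problem therefore reduces to showing that the isogonal conjugate $B_e^{\ast}$ of the Bevan point lies on the line $B_e N$.

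To prove this reduced claim, I would mirror the orthic-triangle translation used to pass from Proposition 4.1 to Lemma 4.1. Since $\triangle ABC$ is the orthic triangle of its excentral triangle $I_A I_B I_C$ (whose circumcenter is $B_e$), the claim transfers to the following intrinsic statement about any acute triangle $T$ with circumcenter $O_T$ and orthic triangle $T_H$: \emph{the isogonal conjugate of $O_T$ with respect to $T_H$ lies on the line through $O_T$ and the Nagel point of $T_H$}. A natural synthetic attack exploits the perspectivity from Lemma 4.1: the orthic $T_H$ and the cevian triangle $\{J, J_B, J_C\}$ (the intersections of the lines $AO_T, BO_T, CO_T$ with the corresponding sides of $T_H$) are perspective from the Nagel point of $T_H$. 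Combined with the observation that the circumcenter of $\{J, J_B, J_C\}$ is itself the midpoint of $O_T$ and its isogonal conjugate within $T_H$ (applying the pedal-circle identity \emph{inside} $T_H$), a Desargues-type perspectivity argument combined with pole--polar reasoning on the conic $\mathcal{C}$ constructed in Section 3 should yield the desired collinearity.

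The main obstacle is that the transferred orthic-triangle lemma is logically equivalent to Proposition 4.2 itself (it is the same statement applied to a different base triangle), so the translation is conceptual rather than truly reductive; the real work is to pin down the location of the Nagel point on the specific diameter of the pedal circle of $O_T$ within $T_H$. If a purely synthetic argument combining perspectivity and pedal-circle geometry proves elusive, the collinearity $B_e, N, B_e^{\ast}$ (which in the Encyclopedia of Triangle Centers notation amounts to $X(40), X(8), X(84)$ being collinear) can be verified by a direct barycentric computation, providing a safety net for formally closing the proof.
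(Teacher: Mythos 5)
Your first paragraph exactly reproduces the paper's opening move: Proposition 4.1 identifies $H'$ with the Nagel point, and the midpoint property $O_0 = \tfrac{1}{2}(B_e + B_e^{\ast})$ reduces everything to the collinearity of $B_e$, $B_e^{\ast}$, and $N$. But from there your proposal has a genuine gap: the reduced collinearity is never actually proven. You concede this yourself when you note that the transferred orthic-triangle statement is ``logically equivalent to Proposition 4.2 itself,'' and the subsequent sketch (``a Desargues-type perspectivity argument combined with pole--polar reasoning on the conic $\mathcal{C}$ should yield the desired collinearity'') is an unexecuted hope, not an argument --- no specific perspectivity, polar, or conic computation is identified that would produce the line $B_e N$.

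The missing idea is Sondat's theorem, and you already hold every hypothesis it needs. The triangles $ABC$ and $A_0B_0C_0$ are perspective at $H' = N$ (by Proposition 4.1, since $A_0B_0C_0$ is the pedal triangle of $B_e$ and the cevians $AA_0, BB_0, CC_0$ concur at the Nagel point), and they are orthologic with orthology centers exactly $B_e = O_{A_0B_0C_0}(ABC)$ (the perpendiculars from $A_0, B_0, C_0$ to the sides of $ABC$ pass through $B_e$ by the pedal construction) and $B_e^{\ast} = O_{ABC}(A_0B_0C_0)$ (a standard property of isogonal conjugates). Sondat's theorem then states that the perspector is collinear with the two orthology centers, giving $B_e^{\ast}, B_e, H'$ collinear in one line --- this is precisely the paper's proof. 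Your barycentric fallback for the collinearity of $X(40)$, $X(8)$, $X(84)$ would indeed close the gap if carried out, so the proposal is salvageable, but as written it is a reduction plus an admission that the main step remains open, whereas the paper dispatches that step synthetically with a single classical theorem.
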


\begin{proof}
	Let $B_e'$ denote the isogonal conjugate of $B_e$ w.r.t $ABC$. According to Proposition 4.1, $H'$ is the Nagel point of $ABC$, so by definition, $ABC$ and $A_0B_0C_0$ are perspective at $H'$. Observe that $B_e = O_{A_0B_0C_0}(ABC)$ and $B_e' = O_{ABC}(A_0B_0C_0)$. By Sondat's theorem, $B_e', B_e, H'$ are collinear and the proposition follows since $O_0$ is the midpoint of $B_eB_e'$(a property of isogonal conjugates).
\end{proof}

We are finally ready to prove the open problem.

\begin{proof}[Proof of Theorem 1.1]
	Let $A',B',C'$ be the pedals from $B_e$ onto $AI,BI,CI$ respectively, and let $H'$ denote the orthocenter of $A'B'C'$. By proposition 4.1 and Theorem 3.1, $H'$ is the Nagel point of $ABC$ and lies on the fixed radical axis, so it remains to show that $IH'$ coincides with this radical axis.
	
	Let $O_0$ denote the circumcenter of $A_0B_0C_0$. Define $\mathcal{C}$ to be the circumconic of $A'B'C'$ that also passes through $B_e, O_0$. According to Proposition 3.4 and Corollary 3.2.2, the fixed radical axis is perpendicular to the tangent of $B_e$ or $O_0$ at $\mathcal{C}$. If $b$ is the tangent of $B_e$ at $\mathcal{C}$, then it suffices to show that $IH'\perp b$.
	
	Since the circumcenter of $A'B'C'$ is the midpoint of $IB_e$, which coincides with the circumcenter of $ABC$, therefore $OH'$ is the Euler line of $A'B'C'$.
	Suppose $OH'$ is transformed to conic $\mathcal{H}$ under isogonal conjugation w.r.t $A'B'C'$. According to Lemma 2.2 and the proof of Lemma 2.3, if $Q_0$ is the isogonal conjugate of $O_0$, then $Q_0$ lies on the Steiner's line of $B_e$ w.r.t $A'B'C'$. By Proposition 3.1, $Q_0$ lies on the Euler line of $A'B'C'$, which is $OH'$, so it follows that $O_0\in \mathcal{H}$. On the other hand, since $I, B_e$ are antipodal on $\odot(A'B'C')$, their Steiner's lines are perpendicular, so the isogonal conjugate of $I$ w.r.t $A'B'C'$ is the point of infinity along the Steiner line of $B_e$, which is $OH'$. Hence, we have $I\in \mathcal{H}$.
	
	Now suppose $b$ intersects $\odot(A'B'C')$ again at $L$(it is possible that $L\equiv B_e$ if $b$ is a tangent.) Consider the following cross-ratio equality,
	\begin{align*}
	B_e(A',B';C',L) &= B_e(A',B';C',B_e) \tag{taken on $\mathcal{C}$} \\
	&= O_0(A',B';C',B_e) \tag{taken on $\mathcal{C}$}\\
	&= O_0(A',B';C',H') \tag{by Proposition 4.2, taken on $\mathcal{H}$} \\
	&= I(A',B';C',H') \tag{taken on $\odot(A'B'C')$}.
	\end{align*}
	This implies that $I,H',L$ are collinear, so $IH'\perp b$, which is what we wanted to prove. The open problem has been resolved.
\end{proof}

\begin{figure}[t]
	\centering
	\includegraphics[scale = 0.7]{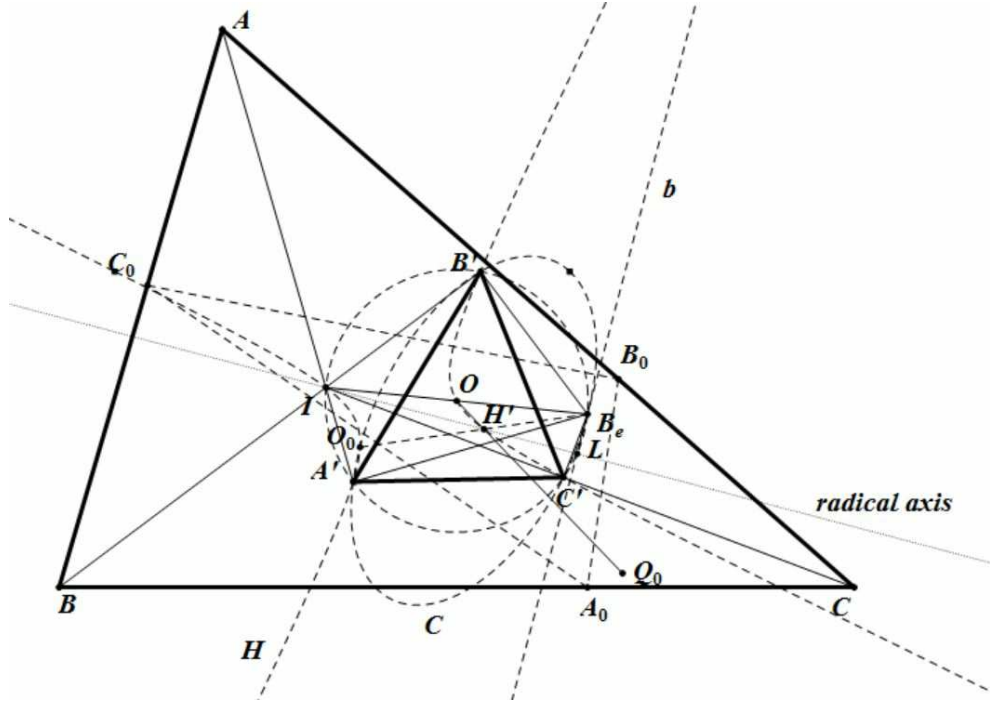}
	\caption{Proof of open problem}
	\label{T11Proof}
\end{figure}

\end{document}